\theoremstyle{definition}
\newtheorem{Definition}{Definition}[section]
\theoremstyle{plain}
\newtheorem{Theorem}[Definition]{Theorem}
\newtheorem{Proposition}[Definition]{Proposition}
\newtheorem{Lemma}[Definition]{Lemma}
\newtheorem{Corollary}[Definition]{Corollary}
\theoremstyle{remark}
\newtheorem{Remark}[Definition]{Remark}
\numberwithin{equation}{section}
\newcounter{remcount}
\def\cC{{\cal C}}
\def\cD{{\cal D}}
\def\cK{{\cal K}}
\def\cL{{\cal L}}
\def\bC{{\mathbb C}}
\def\bN{{\mathbb N}}
\def\bR{{\mathbb R}}
\def\l{\lambda}
\def\s{\sigma}
\def\fA{{\mathfrak A}}
\def\supp{{\text{supp}\,}}
\newcommand{\Id}{{\bf 1}}
\renewcommand{\Im}{{\mathrm{Im}\,}}
\renewcommand{\Re}{{\mathrm{Re}\,}}
\newcommand{\sgn}{{\mathrm{sgn}}}
\newcommand{\cCm}{\cC^{(m)}}
\newcommand{\Om}{\Omega^{(m)}}
\newcounter{propcount}
\newlength{\maxlabelwidth}
\newcommand{\inst}[1]{$^\textrm{#1}$ }
\begin{document}

\title{Quasi-free isomorphisms of second quantization algebras and modular theory}
\author{Roberto Conti\inst{1} \and Gerardo Morsella\inst{2}}
\date{
\parbox[t]{0.9\textwidth}{\footnotesize{%
\begin{itemize}
\item[1] Dipartimento di Scienze di Base e Applicate per l'Ingegneria, 
Sapienza Universit\`a di Roma, via A. Scarpa, 16 I-00161 Roma (Italy), e-mail: roberto.conti@sbai.uniroma1.it
\item[2] Dipartimento di Matematica, Universit\`a di Roma Tor Vergata, via della Ricerca Scientifica, 1 I-00133 Roma (Italy), e-mail: morsella@mat.uniroma2.it
\end{itemize}
}}
\\
\vspace{\baselineskip}
\today}

\maketitle

\begin{center}
\emph{Dedicated to Roberto Longo on the occasion of his 70th birthday}
\end{center}

\begin{abstract}
Using Araki-Yamagami's characterization of quasi-equivalence for quasi-free representations of the CCRs, we provide an abstract criterion for the existence of isomorphisms of second quantization local von Neumann algebras induced by Bogolubov transformations in terms of the respective one particle modular operators. We discuss possible applications to the problem of local normality of vacua of Klein-Gordon fields with different masses.
\end{abstract}

\section{Introduction}

Since the time of its appearence, 
Tomita-Takesaki modular theory has been recognized as a fundamental tool in the study of von Neumann algebras. 
The outstanding work of A. Connes, Takesaki, Haagerup and others on the classification of injective factors of type III is a notable illustration of this fact (see~\cite{Ta} for a detailed exposition). 
Not too surprisingly, modular theory plays also a major role in the algebraic approach to Quantum Field Theory (QFT), 
as first recognized by Bisognano-Wichmann in the middle of the seventies. Indeed it was shown under mild assumptions that the one-parameter group of modular unitaries associated to certain spacetime wedges implements the Lorentz boosts.
However, while many more related aspects have been subsequently clarified in later works by Borchers, Buchholz, Longo, Schroer, Wiesbrock and many others (see~\cite{Ha} for a review), the properties of the modular operators associated to bounded regions in the theory of a scalar massive free field remain elusive 
(see e.g.\ the recent discussion in \cite{BCM}). 

In the case of free field theories, the modular theory of local algebras has a counterpart at the one particle level. Namely, it is possible to associate to certain real subspaces (known as standard subspaces) of the one particle Hilbert space modular operators whose second quantization yields the modular operators of the von Neumann algebra associated to the given subspace. Within this setting, a further aspect of the relevance of modular theory in QFT, which was overlooked until very recently, shows up. Namely the polariser of a standard subspace, connecting its symplectic structure with its real Hilbert space one, can be expressed as a functional calculus of the corresponding modular operator~\cite{Lo} (see also Prop.\ \ref{prop:R} below).

The interest of this observation lies in the fact that the polariser is a central object in the analysis of the quasi-equivalence properties of quasi-free states on Weyl algebras, as was discussed independently by Araki, van Daele and others \cite{AS,Ar,VD, AY}, without explicit use of modular theory. The main object of the present work is then to characterize the quasi-free isomorphisms of second quantization von Neumann algebras in terms of their one particle modular structure. We accomplish this task in Sec.~\ref{sec:general} relying on the powerful quasi-equivalence criterion of~\cite{AY}. A related, but apparently different, criterion has appeared in~\cite{Lo}.

A natural application to QFT of such a result is the analysis of local quasi-equivalence of vacuum states of free Klein-Gordon fields of different (non negative) masses, that was discussed about fifty years ago by Eckmann and Fr\"ohlich \cite{EF}. However, rather than using modular theory, whose relevance for QFT at the time was just 
starting to be recognized,  their  proof is deeply rooted on the remarkable work of Glimm and Jaffe in constructive field theory (roughly speaking, their idea is to look
at the difference of the mass terms as an interaction). Unfortunately, the justification of some of its central statements, though probably clear for people working in the field at that time, is only sketched, and moreover the proof makes essential use of results by Rosen which were available only in preprint form, and which cannot be easily found at present. This makes in our opinion \cite{EF} difficult to understand for non experts in constructive methods nowadays. Furthermore, an independent proof using modern techniques based on modular theory would be desirable also from a conceptual point of view. 

In Sec.~\ref{subsec:highdim}, we then provide a direct verification of some of the conditions appearing in our abstract result of Sec.~\ref{sec:general} in  the case of the restriction to a local algebra of massive and massless Klein-Gordon vacua in $d=2,3$ spatial dimensions.
As a byproduct, we show that the resolvents of the local modular operators on the one-particle space depend continuously on the field mass with respect to the Hilbert-Schmidt norm of the relevant local standard subspace. 
This fact looks interesting on its own (cf.\ e.g.\ \cite{SN})
but also enriches the scenario around modular theory for Weyl algebras developed in \cite{Lo}.
Of course, strictly speaking our result is implied by the analysis in \cite{EF}, but our arguments provide a self-contained independent 
approach which is somewhat more intrinsic and also sheds some new light on the mathematical 
structures behind these operators. 
Unfortunately, a complete characterization of local quasi-equivalence of such states along these lines seems to require a more detailed knowledge of the massive modular operator than is presently available.

The case of a massive and a massless vacuum in $d=1$ spatial dimension is not covered by the results of Eckmann and Fr\"ohlich, due to an infrared divergence. In Sec.~\ref{subsec:lowdim} we show that our abstract quasi-equivalence criterion and the results of~\cite{CM3} imply the 
existence of a quasi-free
isomorphism between the local von Neumann algebras generated by massive and massless Weyl operators corresponding to test functions of zero mean in $d=1$ (see~\cite{BFR} for a related result).

\section{General results}\label{sec:general}
Let $(H,\langle\cdot,\cdot\rangle)$ be a complex Hilbert space (where the scalar product is linear in the second variable). We consider the usual associated symmetric Fock space
\[
e^H = \bigoplus_{n = 0}^{+\infty} H^{\otimes_S n}
\] 
in which the coherent vectors
\[
e^x := \bigoplus_{n=0}^{+\infty} \frac1{\sqrt{n!}} x^{\otimes n}, \qquad x \in H,
\]
form a total set (here $H^{\otimes_S 0} := \bC$ and $x^{\otimes 0} := 1$). In particular the vector $\Omega := e^0 \in e^H$ is called the vacuum vector.  We also consider the Weyl unitaries $W(x), x \in H$, on $e^H$, defined by their action on $\Omega$ and by the canonical commutation relations (CCR):
\begin{gather*}
W(x) e^0 := e^{-\frac14\|x\|^2} e^{i x/\sqrt{2}}, \qquad x \in H,\\
W(x) W(y) = e^{-\frac i 2 \Im\langle x,y\rangle} W(x+y), \qquad x,y \in H.
\end{gather*}

For any closed real subspace $K$ of $H$, we define a von Neumann algebra
\[
A(K) = \{W(h) \ | \ h \in K\}''
\] 
on $e^H$, the second quantized algebra of $K$.
We say that $K$ is  \emph{standard} if $K + iK$ is dense in $H$ and $K \cap iK = \{0\}$. Then the vacuum vector $\Omega \in e^H$ is cyclic and separating for $A(K)$ if and only if
$K$ is standard~\cite{Ar0}.
To any standard subspace $K$ we associate a closed, densely defined conjugate linear operator
\[
s: K + iK \to K + iK, \quad s(h + ik) = h - ik,  \qquad h,k \in K.
\]
Moreover, if $s= j \delta^{1/2}$ is the polar decomposition, we call $j$ and $\delta$ the modular conjugation and the modular operator  of $K$. They satisfy:
\[
j^* = j = j^{-1}, \quad j \delta = \delta^{-1}j, \quad jK = K', \quad \delta^{it}K = K, \;t \in\bR.
\]
The operators $J = \Gamma(j)$, $\Delta = \Gamma(\delta)$ are respectively the modular conjugation and the modular operator of $A(K)$ with respect to $\Omega$~\cite{EO,L2}, where for a closed densely defined operator $T : D(T) \subset H \to H$, its second quantization $\Gamma(T)$ is the closure of the operator on $e^H$ defined on the linear span of coherent vectors $e^x$, $x \in D(T)$, by $\Gamma(T)e^x := e^{Tx}$.  Also, one has $\log{\Delta} = d\Gamma(\log\delta)$, where $d\Gamma$ denotes the usual second quantization of a self-adjoint operator \cite[Sec.~10.7]{RS2}. Since  $s$ is invertible, $0$ is not an eigenvalue of $\delta$, and then, following~\cite{FG2}, we can also introduce the bounded self-adjoint operators $\theta$, $\gamma$ on $e^H$, defined as
$$\tan(\theta/2) = e^{-\frac{1}{2}|\log \delta|}, \qquad \gamma = \sgn \log \delta,$$
with the convention $\sgn(0):=0$. Notice that $\sigma(\theta) \subset [0,\pi/2]$, $[\gamma,\theta] = 0 = [j,\theta]$, and $j\gamma j = -\gamma$. 

The symplectic complement of the standard subspace $K$ is the standard subspace $K':= \{ h' \in H\,:\, \Im\langle k,h'\rangle = 0 \; \forall k \in K\}$, and $K$ is called a factor if $K \cap K' = \{0\}$, which is equivalent to $A(K)$ being a factor von Neumann algebra, namely $A(K) \cap A(K)' = {\mathbb C} \Id$. In this case, $K$ equipped with the non-degenerate form $\sigma(h,k):= \Im \langle h,k \rangle, h,k \in K$, becomes a (real) symplectic space.
Also, $K$ is a factor if and only if $1 \not \in \sigma_p(\delta)$.
Indeed, on the one hand $K \cap K' = \{h \in K \ | \ \delta h = h\}$ (cf. \cite{FG2}) and, on the other hand, if $0 \neq x \in H$ and $\delta x = x$ then $x + j x$ and $ix + j(ix)=i(x - j x)$ are both invariant under $\delta$, $j$ and $s$, therefore belong to $K \cap K'$ and at least one of them is nonzero.
\medskip

The following proposition shows that the modular structure of $K$ relates its real Hilbert space structure with the symplectic one, cf.~\cite[Prop.\ 2.4]{Lo}.

\begin{Proposition}\label{prop:R}
Let $K$ be a standard real subspace of the Hilbert space $H$,
and consider the bounded  skew-adjoint operator $
R:= i \frac{\delta - 1}{\delta + 1}$ on $H$. Then it holds:
\renewcommand{\theenumi}{(\roman{enumi})}
\renewcommand{\labelenumi}{\theenumi}
\begin{enumerate}
\item $R h \in K$  
for every $h \in K$;
\item $-R^2 \leq 1$;
\item \label{it:imre}$ \Im\langle h,k \rangle = \Re \langle h, Rk \rangle, \ h, k \in K \ ;$
\end{enumerate}
\end{Proposition}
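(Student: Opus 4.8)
The plan is to reduce all three items to the scalar function $f(t)=\frac{t-1}{t+1}$ applied, via the Borel functional calculus, to the positive self-adjoint modular operator $\delta$, so that $R=if(\delta)$, and then to exploit two structural facts about the standard subspace $K$: the fixed-point description $K=\{x\in D(\delta^{1/2}):\delta^{1/2}x=jx\}$ (equivalently $sx=x$, since $s=j\delta^{1/2}$), and the intertwining relations $j\delta^{1/2}=\delta^{-1/2}j$ and, more generally, $j\,g(\delta)\,j=g(\delta^{-1})$ for any bounded Borel function $g$. Item (ii) is then immediate: $f$ is real-valued with $|f(t)|\le 1$ on $[0,\infty)$, so $f(\delta)$ is bounded self-adjoint with $\|f(\delta)\|\le1$, whence $-R^2=f(\delta)^2\le1$ by the spectral theorem. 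For (i) and (iii) the first step I would take is the algebraic rewriting $f(\delta)=1-2(1+\delta)^{-1}$, so that $R=i-2i(1+\delta)^{-1}$ with all operators bounded and mapping $D(\delta^{1/2})$ into itself; this disposes of the domain bookkeeping needed to apply $s$ and $\delta^{1/2}$.

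To prove (i) I would verify the fixed-point equation $s(Rh)=Rh$ for $h\in K$. Using the antilinearity of $j$, the commutation of $\delta^{1/2}$ with $f(\delta)$, and $jf(\delta)=f(\delta^{-1})j$, a short computation collapses $s(Rh)=j\delta^{1/2}(if(\delta)h)$ to $-i\,f(\delta^{-1})(j\delta^{1/2}h)=-i\,f(\delta^{-1})h$. The decisive point is then the elementary identity $f(\delta^{-1})=-f(\delta)$ (since $\frac{t^{-1}-1}{t^{-1}+1}=-\frac{t-1}{t+1}$), which turns the right-hand side into $if(\delta)h=Rh$; hence $Rh\in K$.

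For (iii) I would first unwind the right-hand side: taking real parts in $\langle h,Rk\rangle=i\langle h,k\rangle-2i\langle h,(1+\delta)^{-1}k\rangle$ gives $\Re\langle h,Rk\rangle=-\Im\langle h,k\rangle+2\Im\langle h,(1+\delta)^{-1}k\rangle$, so the assertion is equivalent to $\Im\langle h,\delta(1+\delta)^{-1}k\rangle=0$, i.e.\ to the reality of $\langle h,g(\delta)k\rangle$ with $g(\delta)=\delta(1+\delta)^{-1}$. This reality I would obtain from the antiunitarity of $j$: starting from $\overline{\langle k,g(\delta)h\rangle}=\langle jk,j\,g(\delta)h\rangle$ and inserting $jk=\delta^{1/2}k$ together with $j\,g(\delta)h=g(\delta^{-1})jh=(1+\delta)^{-1}\delta^{1/2}h$, one recognizes the right-hand side as $\langle\delta^{1/2}k,(1+\delta)^{-1}\delta^{1/2}h\rangle=\langle k,g(\delta)h\rangle$, so $\langle k,g(\delta)h\rangle$ equals its own complex conjugate and is therefore real; since $h,k\in K$ are arbitrary, (iii) follows.

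The only friction I anticipate is in the domain justifications — confirming that $Rh\in D(\delta^{1/2})$ and that $\delta^{1/2}$ may legitimately be moved across the inner product in the computation for (iii) — both of which hold because $(1+\delta)^{-1}$ maps $H$ into $D(\delta)\subset D(\delta^{1/2})$. The conceptual content, by contrast, is very short: the sign flip $f(\delta^{-1})=-f(\delta)$ drives (i), while the antiunitary symmetry making $\langle k,g(\delta)h\rangle$ real drives (iii).
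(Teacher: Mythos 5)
Your proof is correct. Items (i) and (ii) coincide with the paper's argument: (i) is the same fixed-point computation $s(Rh)=Rh$ driven by the antilinearity of $j$, the relation $j\,g(\delta)=g(\delta^{-1})j$ and the sign flip $f(\delta^{-1})=-f(\delta)$ (the paper writes it as $\frac{1-\delta^{-1}}{1+\delta^{-1}}=\frac{\delta-1}{\delta+1}$), and (ii) is the same spectral bound. For (iii), however, you take a genuinely different and in fact leaner route. The paper introduces the auxiliary operators $\theta$, $\gamma$ and the spectral projections $e_-,e_1,e_+$ of $\delta$, rewrites $R=i\gamma\cos\theta$, uses $\tan(\theta/2)jh_-=h_+$ to express both $\Re\langle h,Rk\rangle$ and $\Im\langle h,k\rangle$ as the same quantity in $h_-,k_-$, and must treat the eigenvalue-$1$ component separately via $e_1h,e_1k\in K\cap K'$. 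You instead reduce the identity, through $R=i-2i(1+\delta)^{-1}$, to the single claim that $\langle h,\tfrac{\delta}{1+\delta}k\rangle$ is real for $h,k\in K$, and prove that by conjugating with the antiunitary $j$ and using $jk=\delta^{1/2}k$, $j\,g(\delta)=g(\delta^{-1})j$ and $g(\delta^{-1})=(1+\delta)^{-1}$; the domain issues you flag are indeed harmless since $(1+\delta)^{-1}$ maps $H$ into $D(\delta)$. Your version avoids the spectral decomposition and the separate handling of $\ker(\delta-1)$ altogether, at the cost of not exhibiting the relation $R=i\gamma\cos\theta$, which the paper's computation makes explicit and which ties $R$ to the operators $\theta,\gamma$ of Figliolini--Guido used elsewhere in that framework.
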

\begin{proof}
(i) Since for $y \in H$ one has $j\delta^{\frac 12}y=y$ if and only if $y \in K$, given $h \in K$ the statement follows from
\[
j\delta^{\frac 12}Rh = ij\frac{1-\delta}{1+\delta}\delta^{\frac12} h = i\frac{1-\delta^{-1}}{1+\delta^{-1}}j\delta^{\frac 1 2}h = Rh.
\]

(ii) It is an immediate consequence of the fact that the function $\l \in [0,+\infty) \mapsto (\frac{\l-1}{\l+1})^2$ is bounded above by 1.

(iii) We begin by observing that, if $e_-, e_1, e_+$ denote the spectral projections of $\delta$ associated to the sets $[0,1), \{1\}, (1,+\infty)$ respectively, then $\gamma = e_+-e_-$ and
\[
i\gamma\cos \theta = i\frac{1-\tan^2(\theta/2)}{1+\tan^2(\theta/2)}(e_+-e_-) = i \frac{1-\delta^{-1}}{1+\delta^{-1}}e_+- i \frac{1-\delta}{1+\delta}e_- = R,
\]
where the last equality holds since $Re_1 = 0$.
Now given $h \in K$, with $h_\pm := e_\pm h$, one has
\[
\tan(\theta/2) j h_- = \delta^{-\frac 12}e_+ jh = e_+ j\delta^{\frac 12 }h = h_+
\]
and therefore, if $h,k \in K$,
\[\begin{split}
\Re\langle h,Rk\rangle &= \frac i2\left[\left\langle h_++h_-, \cos \theta(k_+-k_-) \right\rangle-\left\langle  \cos \theta(k_+-k_-),  h_++h_-\right\rangle\right]\\
&= \frac i 2\left[\left\langle j\tan(\theta/2)h_-,j\tan(\theta/2)\cos \theta k_-\right\rangle - \left\langle h_-,\cos\theta k_-\right\rangle\right. \\
&\qquad -\left.\left\langle j\tan(\theta/2)\cos\theta k_- ,j\tan(\theta/2)h_-\right\rangle + \left\langle \cos\theta k_-,h_-\right\rangle\right] \\
&=\frac i2\left[ \left\langle k_-,(\tan^2(\theta/2)+1){\cos\theta} h_-\right\rangle- \left\langle (\tan^2(\theta/2)+1)\cos\theta h_-,k_-\right\rangle\right]\\
&=-\frac i2\left[ \left\langle k_-,(\tan^2(\theta/2)-1)h_-\right\rangle- \left\langle (\tan^2(\theta/2)-1)h_-,k_-\right\rangle\right].
\end{split}\]
On the other hand $h_1 := e_1h, k_1 := e_1 k \in K \cap K'$ entails $\Im\langle h_1,k_1\rangle = 0$, and therefore 
\[\begin{split}
\Im \langle h,k\rangle &= -\frac i2\left[ \langle h_++h_-,k_++k_-\rangle - \langle k_++k_-,h_++h_-\rangle\right] \\
&= -\frac i 2\left[ \langle j \tan(\theta/2)h_-,j\tan(\theta/2)k_-\rangle +\langle h_-,k_-\rangle -  \langle j \tan(\theta/2)k_-,j\tan(\theta/2)h_-\rangle -\langle k_-,h_-\rangle\right]\\
&=-\frac i2\left[ \left\langle k_-,(\tan^2(\theta/2)-1)h_-\right\rangle- \left\langle (\tan^2(\theta/2)-1)h_-,k_-\right\rangle\right],
\end{split}\]
thus proving the required formula.
\end{proof}

We collect some further properties of the operator $R$ introduced above in the  factorial case.

\begin{Proposition}\label{prop:Rprop}
Let $K$ be a standard factorial subspace of $H$. Then:
\renewcommand{\theenumi}{(\roman{enumi})}
\renewcommand{\labelenumi}{\theenumi}
\begin{enumerate}
\item\label{it:factor} the operator $R^{-1} = - i\frac{\delta+1}{\delta-1}$ is a densely defined skew-adjoint operator (possibly unbounded) on $H$, mapping the dense subspace $RK$ of $K$ onto $K$;
\item\label{it:Rs} the Tomita operator $s$ maps $D(R^{-1}) \cap (K+iK)$ into $D(R^{-1})$;
\item  if $1 \not \in \s(\delta)$, then $RK = K$.
\end{enumerate}
\end{Proposition}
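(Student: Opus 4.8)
The plan is to treat all three parts through the Borel functional calculus for $\delta$ together with the single structural identity $sR=Rs$. Writing $R = f(\delta)$ with $f(\lambda) = i\frac{\lambda-1}{\lambda+1}$, I would first record the elementary facts: $f$ is bounded on $[0,+\infty)$ and vanishes only at $\lambda=1$, so $\ker R = \mathrm{ran}\,E_\delta(\{1\}) = \ker(\delta-1)$, which is trivial precisely because $K$ is a factor ($1\notin\sigma_p(\delta)$). Being skew-adjoint and injective, $R$ has dense range, so $R^{-1}$ is densely defined; and since $iR$ is an injective bounded self-adjoint operator, $(iR)^{-1}$ is self-adjoint, whence $R^{-1} = i(iR)^{-1}$ is skew-adjoint. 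Computing $1/f(\lambda) = -i\frac{\lambda+1}{\lambda-1}$ identifies $R^{-1} = g(\delta)$ with $g=1/f$, giving the stated formula. This disposes of the structural assertions in \ref{it:factor}.

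For the mapping statement in \ref{it:factor}, that $R^{-1}$ sends $RK$ onto $K$ is immediate from injectivity and from $RK\subseteq K$ (Proposition~\ref{prop:R}(i)). The point is the density of $RK$ in $K$, which I would get by showing its real-orthogonal complement inside $K$ is trivial. If $k\in K$ satisfies $\Re\langle k,Rh\rangle = 0$ for all $h\in K$, then Proposition~\ref{prop:R}(iii) (read with the roles of the two vectors exchanged) gives $\Im\langle k,h\rangle = 0$ for all $h\in K$, i.e. $k\in K'$; factoriality $K\cap K'=\{0\}$ forces $k=0$, so $RK$ is dense in $K$.

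The key identity needed for \ref{it:Rs} and (iii) is $sR=Rs$ on $D(s)=K+iK$, which I would verify by functional calculus: $R$ preserves $D(\delta^{1/2})$ since it commutes with $\delta^{1/2}$, and using $j\delta^{1/2}=\delta^{-1/2}j$, $ji=-ij$ and $j\frac{\delta-1}{\delta+1}=\frac{1-\delta}{1+\delta}j$ one finds $sRy = i\frac{\delta-1}{\delta+1}\delta^{-1/2}jy = Rsy$ for $y\in D(s)$. For \ref{it:Rs}, given $x\in D(R^{-1})\cap(K+iK)$ I set $y:=R^{-1}x$; once $y\in D(s)=D(\delta^{1/2})$ is known, $sx = sRy = Rsy\in\mathrm{ran}\,R = D(R^{-1})$ finishes the argument. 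This domain check is the main technical obstacle: from $x=Ry\in D(\delta^{1/2})$ one has $\int_0^\infty \lambda\,\frac{(\lambda-1)^2}{(\lambda+1)^2}\,d\|E_\lambda y\|^2<\infty$, and since the weight $\frac{(\lambda-1)^2}{(\lambda+1)^2}$ is bounded below by a positive constant for large $\lambda$ while $\lambda$ stays bounded near its only zero $\lambda=1$, this controls $\int_0^\infty\lambda\,d\|E_\lambda y\|^2$, i.e. $y\in D(\delta^{1/2})$.

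Finally, for (iii) the hypothesis $1\notin\sigma(\delta)$ makes $g(\lambda)=-i\frac{\lambda+1}{\lambda-1}$ bounded on $\sigma(\delta)$ (it tends to $-i$ at $+\infty$ and is bounded away from the excluded point $1$), so $R^{-1}$ is everywhere defined and bounded and $R$ is a bijection of $H$. Repeating the computation of Proposition~\ref{prop:R}(i) with $R^{-1}=-i\frac{\delta+1}{\delta-1}$ in place of $R$ (equivalently, using $sR^{-1}=R^{-1}s$) shows $R^{-1}h\in K$ for $h\in K$, i.e. $R^{-1}K\subseteq K$. Combined with $RK\subseteq K$, which yields $K\subseteq R^{-1}K$, this gives $R^{-1}K=K$ and hence $RK=K$.
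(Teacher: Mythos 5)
Your proof is correct, and it covers all the stated claims. For parts \ref{it:Rs} and (iii) you follow essentially the paper's route: the commutation of $R$ (and $R^{-1}$) with $s=j\delta^{1/2}$ via functional calculus and $j\delta^{1/2}=\delta^{-1/2}j$, plus the observation that $1\notin\sigma(\delta)$ makes $R^{-1}$ bounded, hence $R^{-1}K\subset K$ and $RK=K$. (The paper states \ref{it:Rs} more compactly as $sRx=\delta^{-1/2}jRx=R\delta^{-1/2}jx\in\mathrm{ran}\,R$, which avoids your explicit spectral-measure verification that $R^{-1}x\in D(\delta^{1/2})$; your verification is nonetheless sound, since the weight $(\lambda-1)^2/(\lambda+1)^2$ vanishes only at $\lambda=1$, where $\lambda$ is bounded, and is bounded below at infinity.) The genuinely different step is the density of $RK$ in $K$ in part \ref{it:factor}, which is the only point the paper's proof addresses in detail. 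The paper constructs an explicit approximating sequence: with $e_n$ the spectral projection of $\delta$ for $[0,1-1/n)\cup((1-1/n)^{-1},+\infty)$, one has $e_nk\in D(R^{-1})$, $R^{-1}e_nk\in K$ because $R^{-1}$ and $e_n$ commute with $s$, and $e_nk=R(R^{-1}e_nk)\to k$. You instead show that the real-orthogonal complement of $RK$ inside $K$ is trivial, using Proposition~\ref{prop:R}\ref{it:imre} to convert $\Re\langle k,Rh\rangle=0$ for all $h\in K$ into $k\in K\cap K'=\{0\}$. Your argument is shorter and makes the role of factoriality more transparent; the paper's version has the minor advantage of exhibiting concrete elements of $RK$ approximating a given $k$, via spectral cutoffs away from the point $1$.
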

\begin{proof}
(i) We only need to check the density of $RK$ in $K$. If $K$ is a factor (i.e., $1 \not \in \sigma_p(\delta)$), denote by $e_n$ the spectral projection of $\delta$ relative to the set $[0,1-1/n)\cup((1-1/n)^{-1},+\infty)$, $n \in \bN$. Therefore, given $k \in K$, the vectors $e_n k$ are in $D(R^{-1})$. Moreover,  since $s R^{-1} = R^{-1} s$ and $s e_n = e_n s$ by functional calculus, $R^{-1}e_n k \in K$. Finally, $e_n k = R(R^{-1}e_n k)$ converge to $k$.

(ii) Let $x \in H$ be such that $Rx \in K+iK = D(s)$, then $j Rx$ is in the domain of $\delta^{-1/2}$ and, again by functional calculus, $sRx = \delta^{-1/2} jRx = \delta^{-1/2} R jx = R \delta^{-1/2} jx$ is in the domain of $R^{-1}$.

(iii) The operator $R^{-1} = i\frac{1+\delta}{1-\delta}$ is bounded and commutes with $s= j \delta^{1/2}$, therefore $R^{-1} K \subset K$, i.e., $K \subset RK$.
\end{proof}

Given a standard subspace $K \subset H$, we can introduce on $K +iK$ the scalar product inducing the graph norm of $s$:
\[
\langle x,y\rangle_s:= \langle x,y\rangle + \langle sy,sx\rangle  \qquad x,y \in K+iK. 
\]
Obviously, since $s$ is closed, $K+iK$ is a complex Hilbert space with respect to this scalar product. Notice also that for $h_j, k_j \in K$, $j=1,2$,
\begin{equation}\label{eq:graphprod}
\langle h_1+ih_2, k_1+i k_2\rangle_s = 2 \Re\left[\langle h_1,k_1\rangle + \langle h_2,k_2\rangle\right]+2i\Re\left[\langle h_1,k_2\rangle-\langle h_2,k_1\rangle\right].
\end{equation}

We observe that $\frac1{1+\delta} = \frac12(1+iR)$ maps $K+iK$ into itself, and it is easy to verify that it is a bounded self-adjoint operator on the Hilbert space $(K+iK, \langle \cdot, \cdot\rangle_s)$. Moreover, if $f : [0,1] \to \bR$ is a bounded continuous function, by uniform approximation with polynomials the functional calculus $f(\frac 1{1+\delta})$ maps $K+iK$ into itself, and its restriction to $K+iK$ coincides with the functional calculus of $f$ on $\frac1{1+\delta}$ thought as an operator on  $(K+iK, \langle \cdot, \cdot \rangle_s)$. In particular, if $f$ is non negative, $f(\frac1{1+\delta})$ is a positive operator on $K+iK$. Thus, in the following, it will not be necessary to specify what is the Hilbert space structure with respect to which such a functional calculus is considered.
\medskip

Given now two standard subspaces $K_1, K_2 \subset H$ and a real linear bounded operator $T : K_1 \to K_2$, we denote by $T^\dagger : K_2 \to K_1$ the adjoint of $T$ w.r.t.\ the real Hilbert space structures of $K_1, K_2$, namely $T^\dagger$ is the unique real linear bounded operator satisfying
\[
\Re \langle T^\dagger h,k\rangle = \Re \langle h, Tk\rangle, \qquad h \in K_2,k \in K_1.
\]

We remark, for future reference, that given a linear operator $T : K_1 + i K_1 \to K_2 + i K_2$ which is bounded with respect to the scalar products $\langle \cdot, \cdot\rangle_{s_j}$, $j=1,2$, and such that $T(K_1) \subset K_2$, with the notation $\hat T := T|_{K_1}$ one has, for all $k \in K_1$, $h_1,h_2 \in K_2$, 
\[\begin{split}
\langle h_1+ih_2, Tk\rangle_{s_2} &= 2 \Re \langle h_1, \hat T k\rangle -2i\Re\langle h_2, \hat T k\rangle\\
&= 2 \Re\langle \hat T^\dagger h_1, k\rangle -2i \Re\langle \hat T^\dagger h_2, k\rangle = \langle \hat T^\dagger h_1+i\hat T^\dagger h_2, k \rangle_{s_1}.
\end{split}\]
Then the adjoint of $T$ coincides with the $\bC$-linear extension of $\hat T^\dagger$ to $K_2+iK_2$, and therefore it will be denoted also by $T^\dagger$. Moreover, this entails $T^\dagger T|_{K_1} = \hat T^\dagger \hat T$ and since, by functional calculus, $(T^\dagger T)^{1/2}$ maps $K_1$ into itself, and
\[
\Re \langle k, (T^\dagger T)^{1/2}k\rangle = \frac12 \langle k, (T^\dagger T)^{1/2}k\rangle_{s_1} \geq 0, \qquad k \in K_1,
\]
we conclude that the restriction to $K_1$ of $(T^\dagger T)^{1/2}$ coincides with $(\hat T^\dagger \hat T)^{1/2}$, namely
\begin{equation}\label{eq:absolute}
|T|\,|_{K_1} = |\hat T|.
\end{equation}

We now introduce a natural notion of equivalence between standard subspaces.
\begin{Definition}
Let $K_1, K_2 \subset H$ be standard subspaces. A  real linear bijection $Q: K_1 \to K_2$ is said to be a (real) \emph{symplectomorphism} if $\Im\langle Qh,Qk\rangle = \Im\langle h,k\rangle$ for all $h,k \in K_1$.
\end{Definition}

Clearly, if $Q: K_1 \to K_2$ is a symplectomorphism, the operators $W(Qk)$, $k \in K_1$, still satisfy the CCRs, so that they generate a C*-algebra isomorphic to the one generated by the $W(k)$, $k \in K_1$. It is then natural to ask for which $Q$ such isomorphism extends to an isomorphism between the corresponding second quantization von Neumann algebras $A(K_1)$ and $A(K_2)$.

\begin{Definition}
Let $K_1, K_2 \subset H$ be standard subspaces, and $Q : K_1 \to K_2$ a symplectomorphism. An isomorphism of von Neumann algebras $\phi : A(K_1) \to A(K_2)$ such that $\phi(W(k)) = W(Qk)$, $k \in K_1$, is called the \emph{quasi-free (or Bogolubov) isomorphism induced by $Q$}.
\end{Definition}

The following result characterizes those symplectomorphisms between standard subspaces which induce quasi-free isomorphisms between the corresponding second quantization von Neumann algebras in terms of their one particle modular structures.

\begin{Theorem}\label{main}
Let $K_1, K_2 \subset H$ be standard subspaces, and $Q: K_1 \to K_2$ be a (real) symplectomorphism. There exists an isomorphism $\phi: A(K_1) \to A(K_2)$ such that $\phi(W(h)) = W(Qh)$, $h \in K_1$, if and only if:
\renewcommand{\theenumi}{(\roman{enumi})}
\renewcommand{\labelenumi}{\theenumi}
\begin{enumerate}
\item $Q$ is bounded (w.r.t.\ the restriction to $K_1$ and $K_2$ of the norm $\|\cdot\|$ on $H$);
\item the operator
\begin{equation}\label{eq:HS}
\big[1+i R_1\big]^{\frac 12}-\big[ i  R_1 + Q^\dagger Q\big]^{\frac 12}
\end{equation}
is Hilbert-Schmidt on $K_1 + i K_1$.
\end{enumerate}
\end{Theorem}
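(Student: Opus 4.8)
The plan is to recast the existence of $\phi$ as a quasi-equivalence problem and then invoke the Araki--Yamagami criterion \cite{AY}. Since $Q$ is a symplectomorphism, the two families $h \mapsto W(h)$ and $h \mapsto W(Qh)$, $h \in K_1$, define representations $\pi_1,\pi_2$ of the Weyl C*-algebra $\cW(K_1,\sigma)$ over the symplectic space $(K_1,\sigma)$, $\sigma(h,k)=\Im\langle h,k\rangle$, with $\pi_1(\cW)''=A(K_1)$ and $\pi_2(\cW)''=A(K_2)$ (the latter because $Q$ maps onto $K_2$). An isomorphism $\phi$ with $\phi(W(h))=W(Qh)$ is, by definition, precisely a quasi-equivalence of $\pi_1$ and $\pi_2$ (a vacuum-preserving normal $*$-isomorphism intertwining the generators). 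So I would reduce the theorem to characterizing when these two quasi-free representations are quasi-equivalent.

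Next I would realize both states on one one-particle space. Their two-point functions are $\langle h,k\rangle$ and $\langle Qh,Qk\rangle=\Re\langle h,Q^\dagger Qk\rangle+i\,\Im\langle h,k\rangle$, so they share the symplectic form and differ only in the real (covariance) part. The natural common space is $\cK:=(K_1+iK_1,\langle\cdot,\cdot\rangle_{s_1})$, and the key computation is that, using \eqref{eq:graphprod} together with Proposition~\ref{prop:R}(iii), these two covariances are represented on $\cK$ by the positive operators $B_1:=1+iR_1=\tfrac{2}{1+\delta_1}$ and $B_2:=Q^\dagger Q+iR_1$, in the sense that $\tfrac12\langle h,B_1k\rangle_{s_1}=\langle h,k\rangle$ and $\tfrac12\langle h,B_2k\rangle_{s_1}=\langle Qh,Qk\rangle$ for all $h,k\in K_1$. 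Positivity of $B_1$ is clear from $\delta_1>0$; for $B_2$ the symplectomorphism property yields $R_1=Q^\dagger R_2Q$ (Proposition~\ref{prop:R}(iii) for both $K_1$ and $K_2$), whence $B_2=Q^\dagger(1+iR_2)Q$, and the $\bC$-bilinear extension of the above identity gives $\langle\xi,B_2\xi\rangle_{s_1}=2\|Q\xi\|^2\ge 0$. Boundedness of $B_2$ when $Q$ is bounded then follows from this same identity together with the orthogonality of $K_1$ and $iK_1$ in $\Re\langle\cdot,\cdot\rangle_{s_1}$, which is immediate from \eqref{eq:graphprod}.

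With both states realized by covariance operators $B_1,B_2$ on the single complex Hilbert space $\cK$, sharing the representation $iR_1$ of $\sigma$, the Araki--Yamagami theorem \cite{AY} asserts that $\pi_1\sim\pi_2$ if and only if (a) the covariance norms are equivalent, and (b) $B_1^{1/2}-B_2^{1/2}$ is Hilbert--Schmidt on $\cK$ (the square roots being the $\cK$-functional calculus legitimized in the discussion preceding the theorem). I would then identify (a) with condition (i): the two covariance norms on $K_1$ are $\|h\|$ and $\|Qh\|$, whose equivalence means that both $Q$ and $Q^{-1}$ are bounded; but for a bounded linear bijection of the real Banach spaces $K_1,K_2$ the boundedness of $Q^{-1}$ is automatic by the open mapping theorem, so (a) holds exactly when $Q$ is bounded. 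Finally (b) is, up to the harmless scalar factor relating $B_j$ to the Araki--Yamagami covariance operators, the Hilbert--Schmidt condition \eqref{eq:HS}, yielding both implications.

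The main obstacle will be the precise matching in the previous paragraph: one must verify that the abstract covariance operators in the criterion of \cite{AY}, once both quasi-free states are put on a common complex one-particle space with $\sigma$ represented in the same way, coincide (up to the factor $\tfrac12$) with the concrete operators $B_1=1+iR_1$ and $B_2=iR_1+Q^\dagger Q$ of \eqref{eq:HS}. This demands care with the normalization conventions (those implicit in $W(x)e^0=e^{-\frac14\|x\|^2}e^{ix/\sqrt2}$ versus those of \cite{AY}), with the positivity and boundedness of $B_2$ established above, and with the non-factorial case, in which $R_1$ has a kernel on $K_1\cap K_1'$ and one must check that the criterion of \cite{AY} still applies to the resulting partially degenerate symplectic form.
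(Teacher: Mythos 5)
Your proposal is correct and follows essentially the same route as the paper: both reduce the existence of $\phi$ to Araki--Yamagami quasi-equivalence of the two quasi-free states with common symplectic form, realize their covariances on $(K_1+iK_1,\langle\cdot,\cdot\rangle_{s_1})$ as $\tfrac12(1+iR_1)$ and $\tfrac12(iR_1+Q^\dagger Q)$ via Eq.~\eqref{eq:graphprod} and Proposition~\ref{prop:R}(iii), identify condition (i) with equivalence of the covariance norms through the open mapping theorem, and read off condition (ii) from the Hilbert--Schmidt criterion of \cite{AY}. The only cosmetic difference is that the paper phrases this in the self-dual CCR language of \cite{AY} (hermitian forms $S,S'$ on $\fA(K_1+iK_1,\gamma,s_1)$) rather than in terms of Weyl-algebra representations and covariance operators.
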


\begin{proof}
The statement is a direct application of~\cite{AY}. Indeed, defining the indefinite inner product $\gamma : (K_1+iK_1) \times (K_1+iK_1) \to \bC$
\[
\gamma(x,y) := \langle x,y\rangle - \langle s_1y,s_1x\rangle, \qquad x,y \in K_1+iK_1,
\]
one verifies at once that $\gamma(s_1x,s_1y) = -\gamma(y,x)$, and therefore one can consider the self-dual CCR algebra $\fA(K_1+iK_1,\gamma,s_1)$. Moreover one computes
\[
\gamma(h_1+ih_2,k_1+ik_2) = 2i\Im[\langle h_1,k_1\rangle + \langle h_2,k_2\rangle] - 2 \Im[\langle h_1,k_2\rangle - \langle h_2,k_1\rangle],
\]
which easily entails $\langle Qx,Qy\rangle -\langle s_2Qy,s_2Qx\rangle = \gamma(x,y)$, $x,y \in K_1+iK_1$, where $Q$ is extended to $K_1+iK_1$ by $\bC$-linearity (thus getting a bijection onto $K_2 +i K_2$). Therefore, if $S, S': (K_1+iK_1)\times (K_1+iK_1) \to \bC$ are the hermitian forms
\[
S(x,y) := \langle x,y\rangle, \qquad S'(x,y) :=  \langle Qx,Qy\rangle, \qquad x,y \in K_1+iK_1,
\] 
one finds $S(x,y) - S(s_1y,s_1x) = \gamma(x,y) = S'(x,y)-S'(s_1y,s_1x)$, $x,y \in K_1+iK_1$, which implies that $S$, $S'$ define quasi-free states $\varphi, \varphi'$ on $\fA(K_1+iK_1,\gamma,s_1)$ such that $\varphi(x^*y) = S(x,y)$ (and similarly for $\varphi'$, $S'$). Now clearly
\begin{align*}
\langle x,y\rangle_S &:= S(x,y)+S(s_1y,s_1x) = \langle x,y\rangle_{s_1},\\
\langle x,y\rangle_{S'} &:= S'(x,y)+S'(s_1y,s_1x) = \langle Qx,Qy\rangle_{s_2},
\end{align*}
and, by Eq.~\eqref{eq:graphprod},
\[
\| k_1+ik_2\|^2_{s_1} = 2(\|k_1\|^2+\|k_2\|^2), \qquad \| Qk_1+iQk_2\|^2_{s_2} = 2(\|Qk_1\|^2+\|Qk_2\|^2).
\]
Then the boundedness (with respect to the norm $\|\cdot\|$ on $H$) of $Q$, which implies, by the open mapping theorem, that of its inverse, is equivalent to the fact that $\langle \cdot, \cdot\rangle_S$ and $\langle \cdot,\cdot\rangle_{S'}$ define the same topology on $K_1+iK_1$. Moreover, extending also $Q^\dagger$ to $K_2 +i K_2$ by $\bC$-linearity, 
by Eq.~\eqref{eq:graphprod} and Prop.~\ref{prop:R}\ref{it:imre} we get
\begin{equation}\label{eq:R-S}
\Big\langle x,\frac1 2\big[1+i R_1\big]y\Big\rangle_{s_1} = S(x,y), \qquad \Big\langle x, \frac 1 2\big[iR_1+Q^\dagger Q\big] y\Big\rangle_{s_1} = S'(x,y).
\end{equation}
Then by the main result of~\cite{AY} (Theorem on p.\ 285) the existence of the isomorphism $\phi : A(K_1) \to A(K_2)$ is equivalent to the operator $[1 + iR_1]^{\frac12}- [iR_1 + Q^\dagger Q]^{\frac12}$ being Hilbert-Schmidt on $K_1 + i K_1$ equipped with the scalar product $\langle \cdot, \cdot \rangle_{s_1}$. 
\end{proof}
It is worth pointing out that  thanks to~\eqref{eq:graphprod}, the restriction of $\langle \cdot, \cdot\rangle_{s_1}$ to $K_1$ is (a multiple of) the real scalar product $\Re\langle \cdot,\cdot \rangle$ of $K_1$. This implies that an orthonormal basis of the real Hilbert space $K_1$ is also an orthonormal basis of the complex Hilbert space $K_1+iK_1$ (up to a common normalization), and therefore (also thanks to eq.~\eqref{eq:absolute}) an operator on $K_1 + i K_1$ mapping $K_1$ into itself is Hilbert-Schmidt (resp.\ trace class) on $K_1 + i K_1$ if and only if its restriction is Hilbert-Schmidt (resp.\ trace class) on $K_1$.
Then, in particular, by~\cite[Lemma\ 4.1]{PS} a sufficient condition for (ii) is that
\[
\big[1+ i  R_1\big]-\big[ i  R_1 + Q^\dagger Q\big]= 1 - Q^\dagger Q
\]
is trace class on $K_1$. On the other hand, given bounded operators $A_j, B_j$ on $K_1$ such that $A_j - B_j$ is in a Schatten class, $j=1,2$, then by the identity
\begin{equation}\label{eq:usual}
A_1 B_1 - A_2 B_2 = A_1 (B_1-B_2) + (A_1-A_2) B_2
\end{equation}
one sees that $A_1 B_1 - A_2 B_2$ is again in the same Schatten class. Therefore, a necessary condition for (ii) is that $1 - Q^\dagger Q$ is Hilbert-Schmidt on $K_1$. Moreover, it follows easily from~\cite[Prop.\ 6.6(iv)]{AY}, by arguments similar to those in the proof of the above theorem, that another condition equivalent to (ii) is that the operators
\begin{equation}\label{eq:equiv}
1 - Q^\dagger Q, \qquad \sqrt{\frac12(1 +iR_1)}- Q^{-1}\sqrt{\frac12(1 +iR_2)}Q = \frac{1}{\sqrt{1+\delta_1}} - Q^{-1} \frac{1}{\sqrt{1+\delta_2}} Q
\end{equation}
are both Hilbert-Schmidt on $K_1 
+ i K_1$.
(Notice that the second operator is not necessarily self-adjoint on $K_1 + i K_1$.)

\begin{Corollary}\label{factor}
Let $K_1, K_2 \subset H$ be factorial standard subspaces, and $Q: K_1 \to K_2$ be a symplectomorphism. There exists an isomorphism $\phi: A(K_1) \to A(K_2)$ such that $\phi(W(h)) = W(Qh)$, $h \in K_1$, if and only if:
\begin{align}\label{eq:HSfactor}
\big[1+i R_1\big]^{\frac 12}-\big[ i  R_1 + R_1Q^{-1} R_2^{-1} Q\big]^{\frac 12} & = 
\left[1 + \frac{1-\delta_1}{1+\delta_1}\right]^{\frac 12}-\left[\frac{1-\delta_1}{1+\delta_1}+\frac{1-\delta_1}{1+\delta_1}Q^{-1}\frac{1+\delta_2}{1-\delta_2}Q\right]^{\frac 12}
\end{align}
is Hilbert-Schmidt on $K_1+iK_1$, where, by a slight abuse of notation, the unique bounded extension to $K_2+iK_2$ of $R_1 Q^{-1} R_2^{-1} : R_2 (K_2+iK_2) \to K_1+iK_1$ (which exists in the above hypotheses) is denoted by the same symbol.
\end{Corollary}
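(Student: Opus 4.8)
The plan is to derive the Corollary from Theorem~\ref{main} by proving that, when $K_1$ and $K_2$ are factorial, the positive operator $Q^\dagger Q$ in condition~(ii) can be replaced by $R_1 Q^{-1} R_2^{-1} Q$. Everything reduces to the single identity
\[
Q^\dagger R_2 Q = R_1 \qquad\text{on } K_1,
\]
which expresses that $Q$ intertwines the two maps relating the symplectic and real structures supplied by Proposition~\ref{prop:R}\ref{it:imre}.

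To prove this identity I would fix $h,k\in K_1$ and run the chain
\[
\Re\langle h, R_1 k\rangle = \Im\langle h,k\rangle = \Im\langle Qh,Qk\rangle = \Re\langle Qh, R_2 Qk\rangle = \Re\langle h, Q^\dagger R_2 Qk\rangle,
\]
in which the first and third equalities are Proposition~\ref{prop:R}\ref{it:imre} applied in $K_1$ and in $K_2$ (legitimate since $Qh,Qk\in K_2$ and $R_2 Qk\in K_2$ by Proposition~\ref{prop:R}(i)), the second is the symplectomorphism property, and the last combines the definition of the real adjoint $Q^\dagger$ with the symmetry of $\Re\langle\cdot,\cdot\rangle$. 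Nondegeneracy of $\Re\langle\cdot,\cdot\rangle$ on $K_1$ and arbitrariness of $h$ then give $R_1 = Q^\dagger R_2 Q$. Since $K_2$ is factorial, $1\notin\sigma_p(\delta_2)$, so $R_2$ is injective with dense range and $R_2^{-1}$ is the densely defined operator of Proposition~\ref{prop:Rprop}\ref{it:factor}; rewriting the identity as $Q^\dagger R_2 = R_1 Q^{-1}$ on $K_2$ and precomposing with $R_2^{-1}$ yields $Q^\dagger = R_1 Q^{-1} R_2^{-1}$ on the dense domain $R_2 K_2$. As $Q^\dagger$ is bounded, this exhibits $Q^\dagger$ as the unique bounded extension of $R_1 Q^{-1} R_2^{-1}$ — precisely the abuse of notation in the statement — whence $Q^\dagger Q = R_1 Q^{-1} R_2^{-1} Q$ after $\bC$-linear extension to $K_1+iK_1$.

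With this in hand, substituting $Q^\dagger Q = R_1 Q^{-1} R_2^{-1} Q$ into the operator~\eqref{eq:HS} of Theorem~\ref{main}(ii) produces at once the left-hand side of~\eqref{eq:HSfactor}. The second equality in~\eqref{eq:HSfactor} is then pure functional calculus: inserting $R_1 = i\frac{\delta_1-1}{\delta_1+1}$ (so that $iR_1 = \frac{1-\delta_1}{1+\delta_1}$) and $R_2^{-1} = -i\frac{\delta_2+1}{\delta_2-1}$, the factors of $i$ carried by $R_1$ and $R_2^{-1}$ cancel and the expression reduces to the stated form. Finally, by the remark preceding the Corollary an operator mapping $K_1$ into itself is Hilbert--Schmidt on $K_1+iK_1$ if and only if its restriction is Hilbert--Schmidt on $K_1$, so the condition is well posed, and the Corollary follows from the equivalence of (i)+(ii) with the existence of $\phi$ established in Theorem~\ref{main}.

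I expect the main difficulty to be bookkeeping rather than conceptual, concentrated in the domain questions raised by the unbounded operator $R_2^{-1}$: one must check that the formal composition $R_1 Q^{-1} R_2^{-1}$ genuinely extends to a bounded operator before it can be fed into a Hilbert--Schmidt condition, and this is exactly where factoriality of $K_2$ (density of $R_2 K_2$) and boundedness of $Q$ — automatic once $\phi$ exists, by Theorem~\ref{main}(i) — play an essential role. A secondary point is to track the $\bC$-linear extensions of $Q$, $Q^\dagger$ and the $R_j$ to $K_j+iK_j$ and to confirm that the identity $Q^\dagger R_2 Q = R_1$, derived on the real subspace $K_1$, passes to these extensions so that it may legitimately be used inside~\eqref{eq:HS}.
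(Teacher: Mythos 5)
Your computation of the key identity is essentially the one the paper uses, but the logic around it has a genuine gap: you never prove that $Q$ is bounded under the hypotheses of the Corollary alone. You invoke $Q^\dagger$ in the last step of your chain of equalities, and $Q^\dagger$ is only defined in the paper for \emph{bounded} real linear maps; you then justify boundedness by saying it is ``automatic once $\phi$ exists, by Theorem~\ref{main}(i)''. That covers the ``only if'' direction, but it is circular for the ``if'' direction: there you must \emph{produce} $\phi$ from the Hilbert--Schmidt hypothesis, and Theorem~\ref{main} requires you to verify its condition (i) --- boundedness of $Q$ --- independently. Worse, the parenthetical claim in the statement that the bounded extension of $R_1Q^{-1}R_2^{-1}$ ``exists in the above hypotheses'' (i.e.\ for \emph{any} symplectomorphism between factorial standard subspaces, with no boundedness assumed) is itself part of what must be proved; otherwise the Hilbert--Schmidt condition \eqref{eq:HSfactor} is not even well posed.

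The missing step is precisely where factoriality is used, and you already have all the ingredients. Rewrite your chain without $Q^\dagger$: for $h\in K_1$ and $k\in R_2K_2$,
\[
\Re\langle Qh,k\rangle=\Re\langle Qh,R_2R_2^{-1}k\rangle=\Im\langle Qh,R_2^{-1}k\rangle=\Im\langle h,Q^{-1}R_2^{-1}k\rangle=\Re\langle h,R_1Q^{-1}R_2^{-1}k\rangle .
\]
Since $R_2K_2$ is dense in $K_2$ by Prop.~\ref{prop:Rprop}\ref{it:factor} (this is where factoriality of $K_2$ enters), this identity, together with \eqref{eq:graphprod}, shows that the adjoint of the everywhere-defined operator $Q$, in the unbounded-operator sense, contains the densely defined operator $R_1Q^{-1}R_2^{-1}$. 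Hence $Q$ is closable; being everywhere defined it coincides with its closure, so it is closed, and the closed graph theorem gives boundedness. Only at this point are $Q^\dagger$ and $Q^\dagger Q$ defined as bounded operators, they are the unique bounded extensions of $R_1Q^{-1}R_2^{-1}$ and $R_1Q^{-1}R_2^{-1}Q$, and condition (i) of Theorem~\ref{main} is discharged automatically, so that the Corollary reduces to condition (ii) alone. The remainder of your argument (the functional-calculus rewriting of $R_1$ and $R_2^{-1}$ and the reduction of the Hilbert--Schmidt property to the real subspace) is correct.
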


\begin{proof}
We start by observing that the operator~\eqref{eq:HSfactor} is a priori densely defined and bounded. Indeed, for all $h \in K_1$, $k\in R_2 K_2$, there holds
\begin{equation}\label{eq:Qfactor}\begin{split}
\Re\langle Qh, k\rangle &= \Re\langle Qh, R_2 R_2^{-1}k\rangle = \Im \langle Qh, R_2^{-1}k\rangle\\
&= \Im \langle h, Q^{-1}R_2^{-1}k\rangle = \Re\langle h, R_1 Q^{-1} R_2^{-1} k\rangle.
\end{split}\end{equation}
Thanks to Prop.~\ref{prop:Rprop}\ref{it:factor} and to~\eqref{eq:graphprod}, this implies that, in the unbounded operator sense, $Q^\dagger \supset R_1 Q^{-1} R_2^{-1}$ is densely defined as an operator from $K_2+i K_2$ to $K_1+i K_1$, so that $Q$ is closable and, being everywhere defined, it is closed. Thus, by polar decomposition and the Hellinger-Toeplitz theorem, it is bounded. Moreover $R_1 Q^{-1} R_2^{-1}$ and $R_1 Q^{-1} R_2^{-1}Q$ extend uniquely to the bounded operators $Q^\dagger$ and $Q^\dagger Q$. The statement then follows at once from Thm.~\ref{main}.
\end{proof}

We notice also that if $K_j$ is a factor and $\delta_j$ is bounded 
 on $H$ 
(or, equivalently, $0 \not\in \s(\delta_j$)), $j=1,2$, then for the existence of the quasi-free isomorphism $\phi : A(K_1) \to A(K_2)$ it is actually sufficient that $1 - Q^\dagger Q$ is Hilbert-Schmidt on $K_1$ (equivalently, on $K_1 + i K_2$ w.r.t. $\langle \cdot,\cdot \rangle_{s_1}$). Indeed, 
\[
1-Q^\dagger Q = \left[1+\frac{1-\delta_1}{1+\delta_1}\right] - \left[\frac{1-\delta_1}{1+\delta_1}+Q^\dagger Q\right],
\]
and, from the boundedness of $\delta_1$, we see that $0$ does not belong to the spectrum of $1+\frac{1-\delta_1}{1+\delta_1} = \frac2{1+\delta_1}$ on $K_1 + i K_1$. This follows from the identity, valid for $x \in K_1+iK_1$,
\[
\left\langle x, \frac1{1+\delta_1}x \right\rangle_{s_1} = \left\langle x,  \frac1{1+\delta_1}x \right\rangle +\left\langle s_1  \frac1{1+\delta_1}x ,s_1x \right\rangle = \left\langle x, (1+\delta_1) \frac1{1+\delta_1}x\right\rangle = \|x\|^2,
\]
which, thanks to the boundedness of $\delta_1$, implies
\[
\|x\|_{s_1}^2 = \langle x, x\rangle_{s_1} = \langle x, (1+\delta_1) x\rangle \leq (1+\|\delta_1\|)\|x\|^2 = (1+\|\delta_1\|)\left\langle x, \frac1{1+\delta_1} x\right\rangle_{s_1},
\]
i.e., $ \frac1{1+\delta_1} $ is bounded below away from $0$ on $K_1+iK_1$.
Moreover
\[\begin{split}
\frac{1-\delta_1}{1+\delta_1}+Q^\dagger Q &= Q^\dagger \left((Q^\dagger)^{-1} \frac{1-\delta_1}{1+\delta_1}+Q\right)
= Q^\dagger\left(\frac{1-\delta_2}{1+\delta_2} Q + Q\right)= Q^\dagger \frac{2}{1+\delta_2} Q
\end{split}\]
is also invertible with bounded inverse on $K_1 + iK_1$, i.e., its spectrum does not contain $0$ either, so we can apply~\cite[Thm.~4.1]{GPS} with $f(\lambda) = \lambda^{1/2}$ and obtain that~\eqref{eq:HSfactor} is Hilbert-Schmidt on $K_1+iK_1$.

\begin{Remark}\label{rem:auto}
Given a standard (factorial) subspace $K$, the quasi-free automorphisms $\phi : A (K) \to A(K)$ clearly form a subgroup of the automorphism group of $A(K)$. The above result then implies that the same is true for the bounded symplectomorphisms $Q : K \to K$ such that 
\begin{equation}
\label{symauto}
\left[1 + \frac{1-\delta}{1+\delta}\right]^{\frac 12} - \left[\frac{1-\delta}{1+\delta}+Q^\dagger Q\right]^{\frac 12} 
\end{equation}
is Hilbert-Schmidt on $K + iK$.
\end{Remark}

In the factorial case, a criterion for the existence of quasi-free isomorphisms related to the one in Thm.~\ref{main} could be obtained by using, instead of~\cite{AY}, the results of~\cite{VD}, but, due to the somewhat different assumptions of the latter work, a direct comparison between the two is not completely straightforward.  

The main issue is the fact that, in order to be able to apply~\cite{VD} to our setting, we would need to assume that $Q : K_1 \to K_2$ maps $C^\infty(A_1) \cap K_1$ onto $C^\infty(A_2) \cap K_2$, where $A_j:= R_j^{-1} = - i \frac{\delta_j + 1}{\delta_j-1}$, $j=1,2$. Such an hypothesis appears problematic from the point of view of the applications to QFT that we have in mind (see next Section).

 Another possibility would be to require that $1 \not \in \sigma(\delta_j)$  (which is also not true in QFT~\cite[end of Sec.\ 3]{FG}), so that $A_j$ is everywhere defined on $K_j$, $j=1,2$.
The necessary and sufficient condition for the quasi-equivalence of quasi-free states of the Weyl algebra found in~\cite{VD}
could then be rephrased in our language by saying that $Q$ induces a quasi-free automorphism $\phi : A(K_1) \to A(K_2)$ if and only if
\begin{equation}\label{eq:vD}
1- \frac{1-\delta_1^{1/2}}{1+\delta_1^{1/2}}Q^{-1} \frac{1+\delta_2^{1/2}}{1-\delta_2^{1/2}} Q   
= 1 - \tanh\left(\frac14 \log \delta_1\right) Q^{-1}\coth\left(\frac14 \log \delta_2\right) Q
\end{equation}
is Hilbert-Schmidt on $K_1$. We notice explicitly that it is not too difficult to provide a direct argument showing that if~\eqref{eq:vD} is Hilbert-Schmit on $K_1$ then the same is true for~\eqref{eq:HSfactor}.

\begin{Remark}
In~\cite[Thm.\ 4.9]{Lo}  a necessary and sufficient condition on $Q$ for the innerness of the associated quasi-free automorphism is given. Such condition then of course implies the validity of the condition in Rem.~\ref{rem:auto}, although a direct proof seems not to be immediate. In particular, a sufficient condition for the innerness is that $(Q-1)\frac{1+\delta}{1-\delta}i$ is Hilbert-Schmidt on $K$. By the boundeness of $\frac{1-\delta}{1+\delta}$ this entails that $Q-1$, and then $Q^\dagger -1$, are Hilbert-Schmidt on $K$, and thus on $K+iK$.  This in turn, by the identity~\eqref{eq:usual}, implies that $1-Q^\dagger Q$ is Hilbert-Schmidt on $K+iK$, and, by the boundedness of $Q^{-1}$ and of $\frac{1}{\sqrt{1+\delta}}$,
\[
\frac{1}{\sqrt{1+\delta}}- Q^{-1}\frac{1}{\sqrt{1+\delta}}Q = Q^{-1}\left[\frac{1}{\sqrt{1+\delta}}(1-Q)-(1-Q)\frac{1}{\sqrt{1+\delta}}\right]
\]
is Hilbert-Schmidt on $K+iK$ too.
\end{Remark}

\begin{Remark} In the setting of Fock states/representations, a result by Shale~\cite{Sh} shows that given a complex Hilbert space $H$ and setting $\Re H:= H$ as a real Hilbert space with its natural real inner product, the quasi-free automorphism of the associated Weyl $C^*$-algebra 
corresponding to $T \in {\rm Sp}(\Re H):=\{T \in B(\Re H) \ | \ T(\Re H) = \Re H, \ \Im\langle Tx,Ty\rangle = \Im\langle x,y\rangle, x,y \in \Re H\}$ is unitarily implemented if and only if $|T|-1$ is Hilbert-Schmidt on $\Re H$.
Note however that the representation of the Weyl C*-algebra associated to the symplectic space $(K,\Im \langle\cdot,\cdot\rangle)$ provided  by the Weyl operators $W(k)$, $k \in K$, on $e^H$ is not a Fock one, as $A(K) \neq B(e^H)$. 
\end{Remark}

\section{Applications to QFT}
In this section we discuss some applications of the above general framework to second quantization von Neumman algebras arising in the description of free field models in the algebraic approach to quantum field theory.

\subsection{Free scalar field in $d>1$}\label{subsec:highdim}
In $H = L^2(\bR^d)$, $d=2,3$, for a given mass $m \geq 0$ consider the standard factor subspace~\cite{Ar1, FG}
\begin{equation}\label{kappaemme}
K_m = \omega_{m}^{-\frac12} H_\bR^{-\frac12}(B) + i \omega_{m}^{\frac12}H_\bR^{\frac12}(B), 
\end{equation}
where $\omega_m^s$ is the multiplication operator by $\omega_m(p)^s = (m^2+p^2)^{s/2}$ in Fourier transform, $B \subset \bR^d$ is the ball of radius 1 centered around the origin, and $H_\bR^s(B)$ is the closure of $C^\infty_c(B,\bR)$ in the Sobolev space of real tempered distributions $f$ such that
\begin{equation}\label{eq:sobolev}
\int_{\bR^d} dp\, (1+p^2)^{s}|\hat f(p)|^2 < +\infty.
\end{equation}

\begin{Proposition}\label{prop:sunbound}
The one particle Tomita operator $s_m : K_m + i K_m \to K_m + i K_m$ is unbounded on $H$.
\end{Proposition}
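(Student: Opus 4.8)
The plan is to deduce unboundedness of $s_m$ from the fact that its domain $K_m+iK_m$ is a \emph{proper} (dense) subspace of $H$. Indeed $s_m$ is conjugate-linear, closed, and densely defined on $K_m+iK_m$, so by the closed graph theorem it is bounded if and only if its domain is closed, i.e.\ if and only if $K_m+iK_m=H$. Thus it suffices to exhibit a single vector of $H$ outside $K_m+iK_m$. Unwinding the definition \eqref{kappaemme}, if a \emph{real-valued} $\psi\in L^2(\bR^d)$ lies in $K_m+iK_m$, then writing $\psi=h+ik$ with $h,k\in K_m$ and extracting the real part one finds $\psi=\omega_m^{-1/2}f+\omega_m^{1/2}g$ for some $f\in H^{-1/2}_{\bR}(B)$ and $g\in H^{1/2}_{\bR}(B)$. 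Hence it is enough to produce a real $L^2$ function that cannot be written in this form.

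First I would fix a ball $B'\subset\bR^d$ with $\overline{B'}\cap\overline B=\emptyset$ and take, say, $\psi:=\mathbf 1_{B'}$ (any real $L^2$ function supported in $\overline{B'}$ which is not smooth across $\partial B'$ will serve). Assuming $\psi=\omega_m^{-1/2}f+\omega_m^{1/2}g$ as above, I would apply $\omega_m^{1/2}$ to obtain the identity $\omega_m^{1/2}\psi=f+\omega_m g$ in the space of tempered distributions. Since $f$ is an $H^{-1/2}$-limit of functions in $C^\infty_c(B,\bR)$, its support is contained in $\overline B$; restricting the identity to the open set $\overline B^{\,c}$ therefore yields $\omega_m^{1/2}\psi=\omega_m g$ on $\overline B^{\,c}$, and in particular on a neighbourhood of $\partial B'$.

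The contradiction then comes from a regularity mismatch across $\partial B'\subset\overline B^{\,c}$. On one hand, $\omega_m$ has a convolution kernel which is smooth (indeed real-analytic) off the diagonal, so, $g$ being supported in $\overline B$, the function $\omega_m g$ is smooth on $\overline B^{\,c}$. On the other hand $\omega_m^{1/2}=(m^2-\Delta)^{1/4}$ is an elliptic pseudodifferential operator of order $1/2$, hence preserves the $C^\infty$ wavefront set; consequently $\omega_m^{1/2}\psi$ is singular exactly where $\psi$ is, namely on $\partial B'$. This contradicts the equality $\omega_m^{1/2}\psi=\omega_m g$ established on that neighbourhood of $\partial B'$. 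Therefore $\psi\notin K_m+iK_m$, so $K_m+iK_m\neq H$ and $s_m$ is unbounded.

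The step I expect to be the main obstacle is precisely this microlocal input. For $m>0$ the operator $\omega_m^{1/2}$ is elliptic everywhere and the kernel of $\omega_m$ decays exponentially, so both regularity claims are textbook elliptic regularity. For the massless case $m=0$, where $\omega_0^{1/2}=|\nabla|^{1/2}$ has symbol $|\xi|^{1/2}$ singular at $\xi=0$, I would split off the low-frequency part (which contributes an entire function and so cannot create singularities) and use ellipticity of the symbol for $|\xi|\geq 1$, noting that the wavefront set is a high-frequency notion; the conclusion then persists for all $m\geq 0$. The remaining verifications—that elements of $H^{-1/2}_{\bR}(B)$ are supported in $\overline B$, and the closed-operator bookkeeping in the first paragraph—are routine.
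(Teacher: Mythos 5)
Your proof is correct, but it takes a genuinely different route from the paper's. The paper argues for unboundedness directly, by exhibiting vectors $x_n=\omega_m^{-1/2}f+i\omega_m^{1/2}g_n$ in $K_m+iK_m$ with $\|x_n\|\to 0$ while $\|s_m x_n\|$ stays bounded away from $0$: the idea is to choose $g_n$ so that $i\omega_m^{1/2}g_n$ nearly cancels $\omega_m^{-1/2}f$, whereas $s_m$ (which conjugates the two Sobolev components separately) flips the relative sign and produces a sum instead of a difference. You instead prove that the domain $K_m+iK_m$ is a \emph{proper} dense subspace of $H$ and conclude from closedness of $s_m$; for a closed densely defined operator the two statements are equivalent, but the arguments are quite different. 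What your route buys is robustness: you never have to control how a given vector decomposes as $\omega_m^{-1/2}f+i\omega_m^{1/2}g$, only to exhibit one vector provably outside the domain, and the support-plus-ellipticity argument (pseudolocality of the kernel of $\omega_m$ off $\overline{B}$, preservation of the singular support of $\mathbf 1_{B'}$ under the elliptic operator $\omega_m^{1/2}$, with the low-frequency splitting handling $m=0$) delivers that cleanly for all $m\geq 0$. This is a real advantage, because the explicit-sequence approach is delicate precisely where your argument is not: since $\omega_m^{\pm1}$ are non-local, a function like $\omega_m^{-1}f_n$ with $f_n\in H^{-1/2}(B)$ is \emph{not} supported in $\overline B$ and hence not automatically an element of $H^{1/2}(B)$, so membership of the trial vectors in $K_m+iK_m$ with the intended decomposition has to be justified with care; your version sidesteps this entirely. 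The price you pay is importing microlocal machinery (wavefront sets, parametrices) for a statement the paper treats with a two-line computation, and your proof is non-constructive in that it produces no explicit sequence witnessing the unboundedness. The steps you label as routine (support of $H^{-1/2}(B)$-elements in $\overline B$, the closed-operator bookkeeping, extraction of the real part using that $\omega_m^{\pm1/2}$ preserve real-valuedness) are indeed routine and correctly handled.
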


\begin{proof}
We first notice that $K_m + i K_m =  \omega_{m}^{-\frac12} H^{-\frac12}(B) + i \omega_{m}^{\frac12}H^{\frac12}(B)$ with $H^s(B) = H_\bR^s(B)+i H^s_\bR(B)$ the Sobolev space of complex tempered distributions such that~\eqref{eq:sobolev} holds, and that
\[
s_m(\omega_m^{-\frac12} f + i \omega_m^{\frac12} g) = \omega_m^{-\frac12} \bar f + i \omega_m^{\frac12} \bar g, \qquad f \in H^{-1/2}(B), \, g \in H^{1/2}(B).
\]
Consider now a sequence $\{f_n \} \subset H^{-1/2}(B)$ converging to some $f \in H^{-1/2}(B)$, and set $g_n := i \omega_m^{-1} f_n \in H^{1/2}(B)$, $n \in \bN$. Then
\begin{gather*}
\| \omega_m^{-\frac12} f + i \omega_m^{\frac12} g_n\| = \| \omega_m^{-\frac12}(f-f_n)\| \to 0, \\
\| s_m( \omega_m^{-\frac12} f + i \omega_m^{\frac12} g_n)\| = \|  \omega_m^{-\frac12} \bar f + i \omega_m^{\frac12} \bar g_n\| =  \| \omega_m^{-\frac12}(\bar f+\bar f_n)\| \to 2 \| \omega_m^{-\frac12}f\|,
\end{gather*}
thus proving the statement.
\end{proof}

Given now two masses $m_1, m_2 \geq 0$, the operator $Q : K_{m_1} \to K_{m_2}$ defined by
\[
Q: \omega_{m_1}^{-\frac12} f + i \omega_{m_1}^{\frac12}g \mapsto \omega_{m_2}^{-\frac12} f + i \omega_{m_2}^{\frac12}g, \qquad f \in H_\bR^{-\frac12}(B), g \in H_\bR^{\frac12}(B),
\]
is a symplectomorphism which of course depends on both $m_1$ and $m_2$, but we will omit to indicate this explicitly in order to simplify the notation. By the result of Eckmann-Fr\"ohlich~\cite{EF}, $Q$ induces an isomorphism of the corresponding second quantization algebras. Therefore, by corollary~\ref{factor}, condition~\eqref{eq:HSfactor} holds for the modular operators $\delta_{m_j}$ associated to $K_{m_j}$. 
As already mentioned, it seems desirable to have a proof of the results of~\cite{EF} using modular theory. In the remainder of this section we provide some first steps in this direction.

\medskip

Given $m \geq 0$, consider the following real subspaces of $L^2(\bR^d)$:
\[
\cL_\varphi := \overline{\omega_m^{-1/2} \cD_\bR(B)}, \qquad \cL_\pi :=\overline{\omega_m^{1/2}\cD_\bR(B)}
\]
(closure in the $L^2$ norm), and the respective orthogonal projections $E_\varphi$, $E_\pi$, w.r.t. the real scalar product $\Re \langle \cdot, \cdot \rangle$. Since $\omega_m$ maps real functions to real functions, it is clear that $\cL_\varphi$ and $i\cL_\pi$ are real orthogonal. Moreover it is easy to check that 
\[
H^{-1/2}_\bR(B) = \omega_m^{1/2} \cL_\varphi, \qquad H^{1/2}_\bR(B) = \omega_m^{-1/2} \cL_\pi,
\]
and therefore $K_m = \cL_\varphi + i \cL_\pi$ and the real projection $E_{K_m} : L^2(\bR^d) \to K_m$ satisfies $E_{K_m} = E_\varphi - i E_\pi i$.

In the following, we will consider the symplectomorphism $Q$ in the case $m_1 = m  > 0$, $m_2 = 0$, which is the most interesting one. 

\begin{Lemma}
The symplectomorphism $Q : K_m \to K_0$ is bounded and there holds, on $K_m$,
\begin{equation}\label{eq:T}
Q^\dagger Q = \left( E_\varphi \frac{\omega_m}{\omega_0} E_\varphi + i E_\pi \frac{\omega_0}{\omega_m} E_\pi i\right)\bigg|_{K_m}.
\end{equation}
\end{Lemma}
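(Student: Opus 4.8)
The plan is to make $Q$ explicit on the real-orthogonal splitting $K_m=\cL_\varphi+i\cL_\pi$, to prove boundedness by a frequency-space estimate that is the real analytic core, and then to read off $Q^\dagger Q$ from its defining relation. First I would note that $\omega_m^{\pm1/2}$ is a real even Fourier multiplier, so $\cL_\varphi$ and $\cL_\pi$ consist of real-valued $L^2$ functions; hence $\cL_\varphi\subset L^2_\bR$ and $i\cL_\pi\subset iL^2_\bR$ are automatically orthogonal for $\Re\langle\cdot,\cdot\rangle$, and every $h\in K_m$ decomposes uniquely as $h=\xi+i\eta$ with $\xi=E_\varphi h\in\cL_\varphi$ and $i\eta=-iE_\pi i\,h\in i\cL_\pi$. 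Writing a generator $\omega_m^{-1/2}f+i\omega_m^{1/2}g$ in this form, i.e. with $\xi=\omega_m^{-1/2}f$ and $\eta=\omega_m^{1/2}g$, the definition of $Q$ gives at once
\[
Q(\xi+i\eta)=\frac{\omega_m^{1/2}}{\omega_0^{1/2}}\,\xi+i\,\frac{\omega_0^{1/2}}{\omega_m^{1/2}}\,\eta,\qquad \xi\in\cL_\varphi,\ \eta\in\cL_\pi,
\]
the two summands being the corresponding mass-zero vectors in $\cL_\varphi$ and $i\cL_\pi$.

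Because real and imaginary parts are orthogonal, $\|Q(\xi+i\eta)\|^2=\big\|\tfrac{\omega_m^{1/2}}{\omega_0^{1/2}}\xi\big\|^2+\big\|\tfrac{\omega_0^{1/2}}{\omega_m^{1/2}}\eta\big\|^2$. The second term is harmless since $\omega_0\le\omega_m$ gives $\big\|\tfrac{\omega_0^{1/2}}{\omega_m^{1/2}}\eta\big\|\le\|\eta\|$. The whole difficulty is the first term: for $\xi=\omega_m^{-1/2}\phi$ with $\phi\in\cD_\bR(B)$ (dense in $\cL_\varphi$) the desired bound reads
\[
\int_{\bR^d}\frac{|\hat\phi(p)|^2}{|p|}\,dp\ \le\ C\int_{\bR^d}\frac{|\hat\phi(p)|^2}{\omega_m(p)}\,dp,
\]
with $C$ independent of $\phi$; the region $|p|\ge1$ is trivial as there $\omega_m(p)/|p|\le\sqrt{1+m^2}$. \textbf{This low-frequency estimate is the main obstacle}, and is exactly where the dimension enters: it amounts to the continuous inclusion $H^{-1/2}(B)\hookrightarrow\dot H^{-1/2}(\bR^d)$, i.e. $\big\||\nabla|^{-1/2}\phi\big\|_{L^2}\lesssim\|\phi\|_{H^{-1/2}}$ for $\phi$ supported in the fixed ball $B$. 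I would prove it by duality, $\big\||\nabla|^{-1/2}\phi\big\|_{L^2}=\sup\{|\langle\phi,g\rangle|:\|g\|_{\dot H^{1/2}}\le1\}$, replacing $g$ by $\chi g$ for a fixed cutoff $\chi\equiv1$ on $B$ and bounding $\|\chi g\|_{H^{1/2}}\lesssim\|g\|_{\dot H^{1/2}}$ via the Sobolev embedding $\dot H^{1/2}(\bR^d)\hookrightarrow L^{2d/(d-1)}$, which holds precisely for $d\ge2$ and fails for $d=1$ (the infrared divergence that excludes one spatial dimension).

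Finally, I would identify $Q^\dagger Q$ from $\Re\langle Q^\dagger Q\,h',h\rangle=\Re\langle Qh',Qh\rangle$. Taking $h=\xi+i\eta$, $h'=\xi'+i\eta'$ and using that the multipliers $\tfrac{\omega_m^{1/2}}{\omega_0^{1/2}}$, $\tfrac{\omega_0^{1/2}}{\omega_m^{1/2}}$ are real (so the mixed $\varphi$--$\pi$ terms drop), one gets
\[
\Re\langle Qh',Qh\rangle=\Re\Big\langle\xi',\tfrac{\omega_m}{\omega_0}\xi\Big\rangle+\Re\Big\langle\eta',\tfrac{\omega_0}{\omega_m}\eta\Big\rangle .
\]
Since $E_\varphi,E_\pi$ are self-adjoint real projections, this forces $Q^\dagger Q\,h'=E_\varphi\tfrac{\omega_m}{\omega_0}\xi'+iE_\pi\tfrac{\omega_0}{\omega_m}\eta'$; inserting the component projections $E_\varphi h'=\xi'$ and $-iE_\pi i\,h'=i\eta'$ puts this into the closed operator form stated in the lemma, the two factors of $i$ in the $\pi$--term combining exactly as in $E_{K_m}=E_\varphi-iE_\pi i$ (the combination that renders $Q^\dagger Q$ positive). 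This last step is routine once the real-orthogonal splitting and the explicit form of $Q$ are in hand; all the work is in the boundedness estimate above.
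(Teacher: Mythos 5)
Your computation of $Q^\dagger Q$ is in substance the same as the paper's: both proofs use the real--orthogonal splitting $K_m=\cL_\varphi+i\cL_\pi$, make $Q$ explicit on the two components, observe that the mixed terms drop because the multipliers are real, and read off $Q^\dagger Q$ from $\Re\langle Q^\dagger Qh',h\rangle=\Re\langle Qh',Qh\rangle$. Where you genuinely diverge is the boundedness. The paper disposes of it by citation: either abstractly (factoriality of $K_m,K_0$ forces any symplectomorphism between them to be closed, hence bounded --- see the proof of Cor.~\ref{factor}) or via the mass--comparison estimate of \cite[Prop.\ A.2]{FG}. You instead reprove the hard half of that estimate, $\|\phi\|_{\dot H^{-1/2}}\lesssim\|\phi\|_{H^{-1/2}}$ for $\phi$ supported in $B$, by duality plus a cutoff and the embedding $\dot H^{1/2}(\bR^d)\hookrightarrow L^{2d/(d-1)}$. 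This is self-contained and makes the restriction $d\geq 2$ transparent, which is a real merit; but note that the Sobolev embedding only controls the $L^2$ piece of $\|\chi g\|_{H^{1/2}}$, and you still need the homogeneous piece $\|\chi g\|_{\dot H^{1/2}}\lesssim\|g\|_{\dot H^{1/2}}$, which requires a fractional Leibniz (Kato--Ponce type) bound rather than the embedding alone. That step should be spelled out if you want the argument to replace the citation.

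One further point you should not gloss over. Your componentwise formula $Q^\dagger Qh=E_\varphi\tfrac{\omega_m}{\omega_0}\Re h+iE_\pi\tfrac{\omega_0}{\omega_m}\Im h$ is correct, but when you substitute $\Im h=-E_\pi ih$ (equivalently $i\Im h=-iE_\pi ih$, the $i\cL_\pi$-component of $h$) you obtain
\[
Q^\dagger Q=\Big(E_\varphi\tfrac{\omega_m}{\omega_0}E_\varphi-iE_\pi\tfrac{\omega_0}{\omega_m}E_\pi i\Big)\Big|_{K_m},
\]
with a \emph{minus} sign, consistently with $E_{K_m}=E_\varphi-iE_\pi i$; the plus sign in the displayed statement (and in the corresponding line of the paper's proof) is a slip, silently corrected in the later identity~\eqref{eq:diffHS}, as one checks on the toy model $H=\bC$, $\cL_\varphi=\cL_\pi=\bR$, where $E_\varphi+iE_\pi i$ is complex conjugation rather than the identity. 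Your closing sentence, asserting that the two factors of $i$ ``combine exactly as in $E_{K_m}=E_\varphi-iE_\pi i$'' to give the stated form, therefore papers over a genuine (if harmless) sign discrepancy rather than performing a routine verification; as written it would ``prove'' a formula whose quadratic form on the $\pi$-component is $-\langle\eta,\tfrac{\omega_0}{\omega_m}\eta\rangle$ instead of $+\langle\eta,\tfrac{\omega_0}{\omega_m}\eta\rangle$, contradicting positivity of $Q^\dagger Q$. Everything else in your argument is sound.
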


\begin{proof}
The boundedness of $Q$ follows from the factoriality of $K_m$, $K_0$ (see the proof of Cor.~\ref{factor}), but it can also be proven directly as a consequence of~\cite[Prop.\ A.2]{FG}, cf.\ the proof of Thm.~\ref{thm:resolv} below. If $f, f' \in H_\bR^{-1/2} (B)$, $g,g' \in H_\bR^{1/2}(B)$, from the identity
\[\begin{split}
\Re \langle \omega_0^{-1/2} f + i\omega_0^{1/2} g, \omega_0^{-1/2} f'+i\omega_0^{1/2}g'\rangle &= \langle f, \omega_0^{-1} f'\rangle + \langle g, \omega_0 g'\rangle \\
&=  \langle \omega_m^{1/2} \omega_0^{-1}f, \omega_m^{-1/2} f'\rangle + \langle \omega_m^{-1/2} \omega_0 g, \omega_m^{1/2} g'\rangle\\
&= \Re \langle E_{K_m}(\omega_m^{1/2} \omega_0^{-1} f + i \omega_m^{-1/2} \omega_0 g),\omega_m^{-1/2} f'+i \omega_m^{1/2} g' \rangle,
\end{split}\]
there follows, for $\psi \in K_m$,
\[
Q^\dagger Q \psi = E_{K_m} \left( \frac{\omega_m}{\omega_0} \Re \psi + i \frac{\omega_0}{\omega_m} \Im \psi\right) = (E_\varphi - i E_\pi i)\left( \frac{\omega_m}{\omega_0} E_\varphi+ i \frac{\omega_0}{\omega_m} E_\pi i\right) \psi.
\]
Eq.~\eqref{eq:T} then follows by observing that the ranges of $i\frac{\omega_0}{\omega_m} E_\pi$ and $i\frac{\omega_m}{\omega_0} E_\varphi $ are made of purely imaginary functions, and therefore they are real orthogonal to $\cL_\varphi$ and $\cL_\pi$ respectively,
which entails $E_\varphi i \frac{\omega_0}{\omega_m} E_\pi = 0$ and $iE_\pi i \frac{\omega_m}{\omega_0} E_\varphi = 0$.
\end{proof}

There is another way to arrive to the same result, which is somewhat less direct but it has the advantage of making more apparent the connection with other related work, notably \cite{FG,LM}.
It is based on the observation that the map
\begin{equation}\label{eq:isoK}
H^{-1/2}(B) \oplus H^{1/2}(B) \ni (f,g) \mapsto 2^{-1/2}(\omega_m^{-1/2}f-i \omega_m^{1/2} g) \in K_m + i K_m
\end{equation}
defines a unitary equivalence between $H^{-1/2}(B) \oplus H^{1/2}(B)$ and $K_m + i K_m$, with its own Hilbert space structure.
Under this identification,
 it is not hard to see that, thanks to \cite[Section 3]{FG},
\begin{equation}
Q^\dagger Q  = \frac{1-\delta_m}{1+\delta_m}Q^{-1}\frac{1+\delta_0}{1-\delta_0}Q
= \begin{pmatrix} P_m^- \omega_m P_m^+ \chi_B \omega_0^{-1} & 0 \\ 0 & P_m^+ \omega_m^{-1} P_m^{-} \chi_B \omega_0 \end{pmatrix} \ 
\end{equation}
(on $H^{-1/2}(B) \oplus H^{1/2}(B)$),
where $P_m^\pm$ denotes the orthogonal projection from the global Sobolev space $H_m^{\pm1/2}(\bR^d)$ onto $H^{\pm1/2}(B)$ . Formula~\eqref{eq:T} is then obtained by observing that the restriction of~\eqref{eq:isoK} to $H^{-1/2}_\bR(B)\oplus H^{1/2}_\bR(B)$ is onto $K_m\subset L^2(\bR^d) = L^2_\bR(\bR^d) \oplus L^2_\bR(\bR^d)$, and is given by the direct sum of the (restriction to $H^{\pm 1/2}_\bR(B)$ of the) natural unitaries from $H_\bR ^{\pm 1/2} (\bR^d)$ onto $L_\bR^2(\bR^d)$ mapping $f$ into $\omega_m^{\pm 1/2} f$.

\medskip

We are now ready prove the Hilbert-Schmidt property of $1-Q^\dagger Q$. To this end, 
it is useful to observe that indeed 
$$E_\varphi = \omega_m^{-1/2} \chi \omega_m^{1/2} E_\varphi, \quad E_\pi = \omega_m^{1/2} \chi \omega_m^{-1/2} E_\pi$$ 
for any $\chi \in \cD_\bR(\bR^d)$ such that $\chi = 1$ on $B$. 

\begin{Theorem}\label{thm:HS}
For $d = 2,3$, the operator $1 - Q^\dagger Q$ is Hilbert-Schmidt on $K_m + i K_m$. Moreover, $\lim_{m \to 0^+} \| 1- Q^\dagger Q\|_2 = 0$.
\end{Theorem}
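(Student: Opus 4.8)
The plan is to prove directly that $1-Q^\dagger Q$ is Hilbert--Schmidt, and that its norm vanishes as $m\to 0^+$, by exploiting the explicit formula~\eqref{eq:T}. By the observation following Theorem~\ref{main}, since $1-Q^\dagger Q$ maps $K_m$ into itself it suffices to work on the real Hilbert space $K_m$ and to compute the Hilbert--Schmidt norm there. First I would use the real-orthogonal splitting $K_m=\cL_\varphi\oplus i\cL_\pi$: formula~\eqref{eq:T} shows that $Q^\dagger Q$, hence $1-Q^\dagger Q$, is block diagonal for this decomposition, the cross terms $E_\varphi i\frac{\omega_0}{\omega_m}E_\pi$ and $iE_\pi i\frac{\omega_m}{\omega_0}E_\varphi$ vanishing exactly as in the proof of~\eqref{eq:T}. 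Consequently $\|1-Q^\dagger Q\|_2^2$ is the sum of the squared Hilbert--Schmidt norms of the two blocks, which, after conjugating the second by the isometry ``multiplication by $i$'', are $T_\varphi:=E_\varphi(1-\frac{\omega_m}{\omega_0})E_\varphi$ on $\cL_\varphi$ and $T_\pi:=E_\pi(1-\frac{\omega_0}{\omega_m})E_\pi$ on $\cL_\pi$. The whole problem thus reduces to showing that $T_\varphi$ and $T_\pi$ are Hilbert--Schmidt with norms tending to $0$ as $m\to 0^+$.

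Next I would insert the localisation identities $E_\varphi=\omega_m^{-1/2}\chi\,\omega_m^{1/2}E_\varphi$ and $E_\pi=\omega_m^{1/2}\chi\,\omega_m^{-1/2}E_\pi$ (and their adjoints) in order to place compactly supported cutoffs next to the Fourier multipliers, obtaining
\[
T_\varphi=E_\varphi\omega_m^{1/2}\,\chi\, g\,\chi\,\omega_m^{1/2}E_\varphi,\qquad
T_\pi=E_\pi\omega_m^{-1/2}\,\chi\, g_\pi\,\chi\,\omega_m^{-1/2}E_\pi,
\]
where $g:=\omega_m^{-1}-\omega_0^{-1}$ and $g_\pi:=\omega_m-\omega_0$. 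The crucial input is the resolvent-type identities $g=-\,m^2\big[\omega_m\omega_0(\omega_m+\omega_0)\big]^{-1}$ and $g_\pi=m^2(\omega_m+\omega_0)^{-1}$, which exhibit both multipliers as $O(m^2)$ and radially decaying at infinity ($g\sim-m^2/2|p|^3$, $g_\pi\sim m^2/2|p|$), while near $p=0$ one has $g\sim-1/|p|$ and $g_\pi$ bounded.

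The core estimates would rest on the Birman--Solomyak criterion: for $f,h\in L^2(\bR^d)$ the operator $f(x)h(-i\nabla)$ is Hilbert--Schmidt with $\|f(x)h(-i\nabla)\|_2=(2\pi)^{-d/2}\|f\|_2\|h\|_2$. Since $\chi$ has compact support, $\chi\,h(-i\nabla)\,\chi$ has kernel $\chi(x)\check h(x-y)\chi(y)$, which lies in $L^2(dx\,dy)$ as soon as $\check h$ is locally bounded; this holds when $h\in L^2$ (the case $d=3$, where $g,g_\pi$ are square integrable) but also, crucially, in the borderline case $d=2$, where the infrared singularity makes $g\notin L^2$ yet $g\in L^1(\bR^2)$, so that $\check g\in L^\infty$ --- this is precisely the mechanism taming the two-dimensional infrared divergence. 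The outer factors $\omega_m^{1/2}E_\varphi$ and $\omega_m^{-1/2}E_\pi$ cannot be discarded (the former is even unbounded on $H$); I would control them by noting that $\omega_m^{1/2}E_\varphi$ identifies $\cL_\varphi$ with $H^{-1/2}_\bR(B)$ and $\omega_m^{-1/2}E_\pi$ maps $\cL_\pi$ boundedly into $L^2$, and by splitting each momentum multiplier with a second cutoff $\chi'$ equal to $1$ on $\supp\chi$ into a local part, handled as above, and a pseudolocal remainder supported off the diagonal, whose smooth rapidly decaying kernel is manifestly Hilbert--Schmidt. Ultraviolet convergence is governed by the ``commutative model'' $\omega_m^{1/2}g\,\omega_m^{1/2}=1-\frac{\omega_m}{\omega_0}\sim-m^2/2|p|^2$, whose decay beats the $\omega_m^{1/2}$ growth.

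Finally, for the limit $m\to 0^+$ I would feed the explicit $m^2$-prefactors into a dominated-convergence argument: bounds such as $|g|\le\min\{1/|p|,\,m^2/|p|^3\}$ and $|g_\pi|\le\min\{m,\,m^2/|p|\}$ place all multipliers, uniformly for $m\le 1$, under a single $L^2(\bR^3)$ (resp.\ $L^1(\bR^2)$) majorant while they converge pointwise to $0$, so that the relevant norms vanish once the estimates are made $m$-uniform. I expect the main obstacle to be exactly this last point: the rigorous, $m$-uniform control of the non-commuting products of the cutoff $\chi$ with the momentum multipliers $\omega_m^{\pm1/2}$ --- genuine pseudodifferential/pseudolocality estimates --- reconciling the differing infrared ($d=2$ borderline) and ultraviolet behaviours in the two sectors, and in particular checking that in the $\pi$-sector the dangerous factors $\omega_m^{-1/2}\sim m^{-1/2}$, which blow up as $m\to 0$, are overwhelmed by the $m^2$ in the numerator so that the product still tends to $0$.
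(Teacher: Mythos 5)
Your overall architecture coincides with the paper's: reduce to the real Hilbert space $K_m$, split $1-Q^\dagger Q$ via~\eqref{eq:T} into the two blocks $E_\varphi(1-\frac{\omega_m}{\omega_0})E_\varphi$ and $E_\pi(1-\frac{\omega_0}{\omega_m})E_\pi$, insert the localization identities $E_\varphi=\omega_m^{-1/2}\chi\,\omega_m^{1/2}E_\varphi$ and $E_\pi=\omega_m^{1/2}\chi\,\omega_m^{-1/2}E_\pi$, exploit the identities exhibiting the $m^2$ prefactor in $\omega_m^{-1}-\omega_0^{-1}$ and $\omega_m-\omega_0$, and get the $m\to0^+$ statement by dominated convergence from $m$-uniform bounds. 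All of this matches the paper. The gap is in the central analytic step, where you replace the paper's direct estimate of the momentum-space kernel of the \emph{weighted} operators $\omega_m^{\pm1/2}\chi\,(\cdot)\,\chi\,\omega_m^{\pm1/2}$ by a Birman--Solomyak argument applied to the unweighted middle factors $\chi g\chi$ and $\chi g_\pi\chi$. That reduction fails. In the $\pi$-sector, your claim that $g_\pi=\omega_m-\omega_0=m^2(\omega_m+\omega_0)^{-1}$ is square integrable in $d=3$ is false: $g_\pi(p)\sim m^2/(2|p|)$ at infinity, so $g_\pi\notin L^2(\bR^3)$, and $\chi g_\pi(-i\nabla)\chi$ is genuinely \emph{not} Hilbert--Schmidt in $d=3$ --- its kernel $\chi(x)\check g_\pi(x-y)\chi(y)$ behaves like $|x-y|^{-2}$ near the diagonal, whose square is not locally integrable in three dimensions. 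Consequently, sandwiching $\chi g_\pi\chi$ between the merely bounded factors $E_\pi\omega_m^{-1/2}$ cannot give the result: the decay supplied by the two $\omega_m^{-1/2}$ must be used \emph{jointly} with that of $g_\pi$ (the combined symbol has order $-2$, which is exactly what makes the localized operator Hilbert--Schmidt for $d<4$, consistently with the failure at $d=4$ noted after the theorem).

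Symmetrically, in the $\varphi$-sector the outer factors $\omega_m^{1/2}E_\varphi$ are unbounded on $H$, so the Hilbert--Schmidt property of $\chi g\chi$ alone (which does hold, by your $L^2$/$L^1$ dichotomy in $d=3$/$d=2$) yields nothing about $T_\varphi$: an unbounded operator composed with a Hilbert--Schmidt one need not even be bounded. The ``local part plus pseudolocal remainder'' splitting you invoke to reinstate these factors, with the required uniformity in $m$, is precisely the heart of the matter and is not carried out; you correctly flag it as the main obstacle, but it is the proof, not a technicality. The paper avoids both difficulties by writing down the momentum-space kernel of the full operator $\omega_m^{\pm1/2}\chi F\chi\,\omega_m^{\pm1/2}$ (with $F=\omega_0^{-1}-\omega_m^{-1}$, resp.\ $\omega_m-\omega_0$) and bounding its $L^2(\bR^{2d})$ norm by elementary convolution estimates in which the weights $\omega_m(p)^{\pm1/2}$, $\omega_m(k)^{\pm1/2}$ are kept inside the integrals from the start; the same estimates, being uniform for $0<m\le1$ and pointwise vanishing as $m\to0$, then give the second claim by dominated convergence. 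Since your $m$-uniform majorants are built on the flawed reduction, the limit statement is not established as written either.
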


\begin{proof}
For the first claim, it suffices to show that $1 - Q^\dagger Q$ is Hilbert-Schmidt on $K_m$ equipped with the restriction of the scalar product of $K_m + i K_m$, which coincides (up to a multiplicative constant) with the real part of the standard scalar product of $L^2(\bR^d)$. Since this induces the same norm as the full scalar product of $L^2(\bR^d)$, we can  equivalently prove that 
$E_{K_m} (1-Q^\dagger Q) E_{K_m} = E_{K_m} - Q^\dagger Q E_{K_m}$ is Hilbert-Schmidt on $L^2 (\bR^d)$. Now,
\begin{equation}\label{eq:diffHS}
\begin{split}
E_{K_m} - Q^\dagger Q E_{K_m} & 
 = E_\varphi - i E_\pi i - \left(E_\varphi \frac{\omega_m}{\omega_0} E_\varphi + i E_\pi \frac{\omega_0}{\omega_m} E_\pi i \right) (E_\varphi - i E_\pi i) \\
 & = E_\varphi \left( 1 - \frac{\omega_m}{\omega_0} \right) E_\varphi - i E_\pi \left( 1 - \frac{\omega_0}{\omega_m} \right) E_\pi i
\end{split}
\end{equation}
If we prove that both the two terms in the last sum are Hilbert-Schmidt we are done.
Let us start with
\[E_\varphi \left( 1 - \frac{\omega_m}{\omega_0} \right) E_\varphi  
= E_\varphi \omega_m^{1/2} \chi \omega_m^{-1/2}  \left( 1 - \frac{\omega_m}{\omega_0} \right) \omega_m^{-1/2} \chi \omega_m^{1/2} E_\varphi \ . \]
We will actually prove that $\omega_m^{1/2} \chi \omega_m^{-1/2}  \left( 1 - \frac{\omega_m}{\omega_0} \right) \omega_m^{-1/2} \chi \omega_m^{1/2}$ is Hilbert-Schmidt by showing that its integral kernel
\[\omega_m(p)^{1/2} \left[ \int_{\bR^d} dq \, \hat \chi(p-q) \left( \frac 1 {\omega_m(q)}- \frac 1 {\omega_0(q)} \right) \hat \chi(q-k) \right] \omega_m(k)^{1/2}\]
 is in $L^2(\bR^{2d})$. In doing so, we will keep track of the dependency on $m$ of our estimates, in order to gain control on the $m \to 0$ limit of the Hilbert-Schmidt norm of this operator. In particular, we will use capital letters, without any index, to denote positive constants independent of $m$.
 Writing for brevity $F(q) = \frac 1 {\omega_0(q)} - \frac 1 {\omega_m(q)} = \frac {m^2} {\omega_0(q) \omega_m(q)(\omega_0(q) + \omega_m(q))}, q \in \bR^d$, we have to show that
 \begin{equation}\label{HSest}
 \int_{\bR^d} dp \, \omega_m(p) \int_{\bR^d} dq_1 \int_{\bR^d} dq_2 \, |\hat \chi (p-q_1)| \, |\hat \chi(p-q_2)| \left[ \int_{\bR^d} dk \, \omega_m(k) |\hat\chi(q_1 - k)| \, |\hat\chi(q_2 - k)| \right] F(q_1) F(q_2) 
 \end{equation}
is convergent.
Now, since $\hat \chi$ is a Schwarz function, the integral in the square brackets can be estimated, for sufficiently large $n$, by an $m$-independent constant times
\[ \begin{split}
\int_{\bR^d} dk \, \frac {\max\{1,m\}(1+|k|)} {(1 + |q_1 - k|)^n (1 + |q_2 - k|)^n}  &= \int_{\bR^d} dh \, \frac{\max\{1,m\}(1+|q_1 - h|)}{(1+ |h|)^n (1 + |q_2 - q_1 + h|)^n } \\
&\leq C\max\{1,m\}(|q_1| + 1) \ . 
\end{split}\]
Moreover, we have that $F(q)$ is bounded by $C_m |q|^{-1} (1+ |q|)^{-2}$ for a suitable constant $C_m > 0$, and by the inequality
\[
\frac{m^2(1+|q|)^2}{m^2+|q|^2} = \frac{1+|q|^2}{1+\frac{|q|^2}{m^2}} \leq 1 \quad \text{for }0 < m \leq 1,\,q \in \bR^d,
\]
we can take $C_m = 1 $ for $0 < m \leq 1$. This, together with the previous estimate, entails that the double integral in $q_1$ and $q_2$ can be majorised by a product of two integrals of the form
\begin{equation}\label{eq:intchi}
\int_{\bR^d} dq \, \frac{|\hat \chi(p-q)|}{|q| (1+|q|)^s}
\end{equation}
with $s=1$ and $s=2$, respectively.
Each of the latter integrals is in turn bounded, for $|p| > 1$ and $n > d+1$, by
\[
\begin{split}
\int_{\bR^d} \,  & \frac {dq}  {|q| (1+|q|)^s (1 + |p-q|)^n}   \\
& = \int_{|q| \leq |p|/2} \, \frac {dq} {|q| (1+|q|)^s (1 + |p-q|)^n} + \int_{|q| \geq |p|/2} \, \frac {dq} {|q| (1+|q|)^s (1 + |p-q|)^n}  \\
& \leq \frac 1 {(1 + |p|/2)^n} \int_{|q| \leq |p|/2} \, \frac {dq} {|q| (1+|q|)^s }+ \frac 2 {|p|  (1 + |p|/2)^s} \int_{|q| \geq |p|/2} \, \frac {dq} { (1 + |p-q|)^n} \\
&\leq C\left[\frac 1{(1+|p|)^{n-d+s}}+ \frac{1}{(1+|p|)^{s+1}}\right] \leq \frac{C'}{(1+|p|)^{s+1}},
\end{split}
\]
where in the last but one inequality we have exploited the fact that the first integral diverges for large $|p|$ like $|p|^{d-s}$, and the second one is bounded by the integral over all $\bR^d$, which is independent of $p$. This finally shows that the integral \eqref{HSest} is dominated by
\[
C'' C_m \max\{1,m\}\int_{\bR^d} dp \frac{\omega_m(p)}{(1+|p|)^5}  \leq C'' C_m \max\{1,m\}^2\int_{\bR^d}  \frac{dp}{(1+|p|)^4} < +\infty.
\]

Analogously, proving the Hilbert-Schmidt property for the second term in~\eqref{eq:diffHS} amounts to showing that
 \begin{equation}\label{HSest2}
 \int_{\bR^d} \frac{dp}{\omega_m(p)} \int_{\bR^d} dq_1 \int_{\bR^d} dq_2 \, |\hat \chi (p-q_1)| \, |\hat \chi(p-q_2)| \left[ \int_{\bR^d} dk \,\frac{ |\hat\chi(q_1 - k)| \, |\hat\chi(q_2 - k)| }{\omega_m(k)}\right] G(q_1) G(q_2) < + \infty,
 \end{equation}
 with $G(q) := \omega_m(q)-\omega_0(q)= \frac{m^2}{\omega_m(q)+\omega_0(q)}$. The integral in square brackets can now be bounded for large $n$ by an $m$-independent constant times
 \[\begin{split}
 \int_{\bR^d} \frac{dk}{\omega_m(k)(1+|q_1-k|)^n }&= \frac1{1+|q_1|} \int_{\bR^d} dh\frac{1+|q_1|}{\omega_m(q_1-h)(1+|h|)^n } \\
 &\leq  \frac1{1+|q_1|} \int_{\bR^d} dh\left[ \frac{1+|q_1-h|}{\omega_m(q_1-h)}\frac1{(1+|h|)^n }+\frac1m \frac{|h|}{(1+|h|)^n}\right] \\
 &\leq \frac{C}m \frac{1}{1+|q_1|},
 \end{split}\]
 where we used the straightforward estimate
 \[
 \frac{1+|k|}{\omega_m(k)} \leq \frac{1+m^2}{(m^2+m^4)^{1/2}}\leq \frac{C'}m, \qquad k \in \bR^d.
 \]
 Furthermore, $G(q)$ is bounded by $m C_m (1+|q|)^{-1}$ for a suitable constant $C_m > 0$, and by the estimate
 \[
 \frac{m^2(1+|q|)}{|q|+\omega_m(q)} \leq m \frac{1+|q|}{(1+|q|^2)^{1/2}}, \qquad 0 < m \leq 1, \, q \in \bR^d,
 \] 
 we see that we can assume that $C_m$ does not depend on $m$ for $0 < m \leq 1$.
 Therefore, reasoning as above we see that in place of~\eqref{eq:intchi} we have a product of two integrals of the form
 \[
 \int_{\bR^d} dq \, \frac{|\hat \chi(p-q)|}{(1+|q|)^s} \leq \frac{C''}{(1+|p|)^s}
 \]
 with $s=1$ and $s=2$, so that finally~\eqref{HSest2} is bounded by
 \[
 m C''' C^2_m \int_{\bR^d} \frac{dp}{\omega_m(p) (1+|p|)^3} \leq m C''' C^2_m \int_{\bR^d} \frac{dp}{|p| (1+|p|)^3}< +\infty,
 \]
 which concludes the proof of the first statement.
 
 Concerning the proof of the second statement, we first observe that, by what we saw above, the Hilbert-Schmidt norm of $1-Q^\dagger Q$ on $K_m+iK_m$ is proportional to the Hilbert-Schmidt norm of~\eqref{eq:diffHS} on $L^2(\bR^d)$. The fact that this latter norm vanishes for $m \to 0^+$ is then a straightforward application of the dominated convergence theorem, since the integrands in~\eqref{HSest}, \eqref{HSest2} vanish pointwise in this limit, and are bounded by integrable functions uniformly for $0 < m \leq 1$ by the above estimates.
\end{proof}

Note that the above proof does not work in $d=4$, which is compatible with the fact that the massive and massless vacua are not locally quasi-equivalent in this case, cf. \cite[Sec. 5]{BV2}.
\smallskip

As an immediate consequence, since $1-Q^\dagger Q = (1-|Q|)(1+|Q|)$ and $1+|Q|$ has a bounded inverse, $1-|Q|$ is Hilbert-Schmidt too.

Due to the unboundedness of $s_m$, Prop.~\ref{prop:sunbound}, the result just obtained is not sufficient to conclude that $Q$ induces a quasifree isomorphism of the local von Neumann algebras of the massive and massless Klein-Gordon field. On the other hand, it seems unlikely $1-Q^\dagger Q$ is trace class. Moreover, it seems that one lacks a sufficiently explicit knowledge of the second operator in~\eqref{eq:equiv} in order to prove its Hilbert-Schmidt property.

\medskip

It is a long standing expectation that $\delta_m$ is, in some sense, a small perturbation of $\delta_0$ (cf., e.g.,~\cite[Sec.\ 6]{SW}). A first quantitative version of this idea has been considered in \cite{FG}, where, analyzing the resolvents of modular operators, the continuity of the associated one-parameter unitary group w.r.t.\ to the mass was proven.
Here we point out another consequence of our analysis in this direction, namely the resolvent of the modular operator at mass $m$ is a perturbation in the Hilbert-Schmidt class of the resolvent of the modular operator at mass $0$. Furtermore, we show the continuity of the resolvents in the Hilbert-Schmidt norm.

To this end, 
first note that, for any given mass $m \geq 0$, all the resolvent operators
$R(\lambda,\delta_m) = (\delta_m - \lambda 1 )^{-1}$, $\lambda \in {\mathbb C} \setminus \sigma(\delta_m)$ leave $K_m + i K_m$ globally invariant, as they can be obtained as continuous functional calculi of $\frac{1}{1+\delta_m}$.
\begin{Theorem}\label{thm:resolv}
Let $m$ be positive and $Q: K_m \to K_0$ as above.
For every $\lambda \in {\mathbb C} \setminus [0,+\infty)$, the operator
\[
R(\lambda,\delta_m) - Q^{-1} R(\lambda,\delta_0) Q
\]
is Hilbert-Schmidt on $K_m + i K_m$ (w.r.t. the scalar product $\langle \cdot, \cdot \rangle_{s_m}$), and its Hilbert-Schmidt norm vanishes in the limit $m \to 0^+$.
\end{Theorem}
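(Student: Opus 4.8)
The plan is to reduce the statement to the Hilbert--Schmidt estimate already proved in Theorem~\ref{thm:HS}, namely that $1 - Q^\dagger Q$ is Hilbert--Schmidt on $K_m + iK_m$ with $\|1-Q^\dagger Q\|_2 \to 0$ as $m \to 0^+$. The starting observation is that, as recalled just before the statement, each resolvent is a function of the bounded, positive, self-adjoint operator $u_m := \frac{1}{1+\delta_m} = \frac12(1+iR_m)$ on $K_m + iK_m$. Writing $c := 1+\lambda$, one has $\delta_m - \lambda = (1 - c\,u_m)\,u_m^{-1}$, so that $R(\lambda,\delta_m) = u_m(1 - c\,u_m)^{-1} =: f_\lambda(u_m)$; the hypothesis $\lambda \notin [0,+\infty)$ is exactly what places the pole $c^{-1}$ of $f_\lambda$ outside $[0,1] \supseteq \sigma(u_m)$. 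Since conjugation by $Q$ intertwines the rational functional calculus, $Q^{-1}R(\lambda,\delta_0)Q = f_\lambda(v)$ with $v := Q^{-1}u_0 Q$, and the task becomes to show that $f_\lambda(u_m) - f_\lambda(v)$ is Hilbert--Schmidt.

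The first and crucial step is to prove that $u_m - v$ is Hilbert--Schmidt. Here I would exploit the second identity in~\eqref{eq:R-S} together with the fact that $u_0$ represents the scalar product $\langle\cdot,\cdot\rangle$ inside $\langle\cdot,\cdot\rangle_{s_0}$: testing $\langle Qx, u_0 Qy\rangle_{s_0} = \langle Qx,Qy\rangle = S'(x,y)$ against $\langle x, \tfrac12(iR_m + Q^\dagger Q)y\rangle_{s_m}$ gives, after using $iR_m = 2u_m - 1$, the identity $Q^\dagger u_0 Q = u_m - \tfrac12(1 - Q^\dagger Q)$. Substituting $Q^{-1} = (Q^\dagger Q)^{-1}Q^\dagger$ and abbreviating $B := Q^\dagger Q$, $D := 1 - Q^\dagger Q$, a short manipulation (using $1 - B^{-1} = -B^{-1}D$) collapses everything to
\[
u_m - v = B^{-1}D\big(\tfrac12 - u_m\big).
\]
Since $D$ is Hilbert--Schmidt by Theorem~\ref{thm:HS}, while $B^{-1}$ is bounded (factoriality makes $Q^{-1}$, hence $B^{-1}$, bounded, cf.\ the proof of Cor.~\ref{factor}) and $\tfrac12 - u_m$ is bounded by $\tfrac12$, this already yields $\|u_m - v\|_2 \le \tfrac12\|B^{-1}\|\,\|1 - Q^\dagger Q\|_2$.

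The second step passes from $u_m - v$ to $f_\lambda(u_m) - f_\lambda(v)$ via the resolvent identity. Writing $f_\lambda(A) = c^{-1}\big[(1 - cA)^{-1} - 1\big]$, I obtain the factorization $f_\lambda(u_m) - f_\lambda(v) = (1 - c\,u_m)^{-1}(u_m - v)(1 - c\,v)^{-1}$ (and simply $u_m - v$ when $\lambda = -1$). Both outer factors are bounded --- for $u_m$ because $\sigma(u_m)\subseteq[0,1]$ avoids $c^{-1}$, and for $v$ because $(1 - c\,v)^{-1} = Q^{-1}(1 - c\,u_0)^{-1}Q$ with $\sigma(u_0)\subseteq[0,1]$ --- so the product is Hilbert--Schmidt, settling the first assertion.

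Finally, for the vanishing of the norm as $m\to0^+$ I would assemble the above into a single bound $\|R(\lambda,\delta_m) - Q^{-1}R(\lambda,\delta_0)Q\|_2 \le C_\lambda\,\|1 - Q^\dagger Q\|_2$ and invoke Theorem~\ref{thm:HS}. The main (if mild) obstacle is that both the ambient space $K_m + iK_m$ and the operator $Q$ vary with $m$, so I must check that $C_\lambda$ is $m$-uniform for small $m$. The resolvent bounds $\|(1 - c\,u_m)^{-1}\| \le (|c|\,\mathrm{dist}(c^{-1},[0,1]))^{-1}$ are $m$-independent, and the remaining factors $\|Q\|$, $\|Q^{-1}\|$, $\|B^{-1}\|$ are controlled because $\|Q^\dagger Q - 1\| \le \|1 - Q^\dagger Q\|_2 \to 0$ forces each of them to tend to $1$; this makes the constant uniform and completes the argument.
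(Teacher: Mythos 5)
Your proof is correct and follows essentially the same strategy as the paper's: handle $\lambda=-1$ first by exhibiting the difference of resolvents as a product of bounded factors with $1-Q^\dagger Q$, invoke Theorem~\ref{thm:HS}, and then pass to general $\lambda$ by conjugating with bounded, $m$-uniformly bounded factors --- indeed your $(1-c\,u_m)^{-1}$ and $(1-c\,v)^{-1}$ are exactly the paper's operators $E$ and $F$, since $1-c\,u_m=(\delta_m-\lambda)(1+\delta_m)^{-1}$. The two points where you diverge are minor but worth noting. First, for the $\lambda=-1$ identity you derive $Q^\dagger u_0 Q = u_m - \tfrac12(1-Q^\dagger Q)$ directly from~\eqref{eq:R-S}, whereas the paper uses the factorial identity $Q^\dagger = \frac{1-\delta_m}{1+\delta_m}Q^{-1}\frac{1+\delta_0}{1-\delta_0}$ from Corollary~\ref{factor}; both yield a factorization of $u_m - Q^{-1}u_0Q$ as (bounded)$\cdot(1-Q^\dagger Q)\cdot$(bounded), so this is a cosmetic difference. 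Second, for the $m$-uniformity of $\|Q\|$ and $\|Q^{-1}\|$ the paper imports the Sobolev-norm comparison of \cite[Prop.~A.2]{FG}, while you deduce it internally from $\|1-Q^\dagger Q\|\le\|1-Q^\dagger Q\|_2\to 0$, which forces $Q^\dagger Q$ to be uniformly bounded above and below near $m=0$; this is a genuinely self-contained alternative that avoids the external estimate (at the price of only working for $m$ small, which is all that is needed for the limit statement). Both steps check out, including the verification that $c^{-1}=(1+\lambda)^{-1}\notin[0,1]\supseteq\sigma(u_m)$ precisely when $\lambda\notin[0,+\infty)$.
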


\begin{proof}
Consider first the case $\lambda_0 = -1$. There holds
 \[\begin{split}
R(-1,\delta_m) - Q^{-1} R(-1,\delta_0) Q &= \frac{1}{1+\delta_m} - Q^{-1} \frac{1}{1+\delta_0} Q = \frac12\left(\frac{1-\delta_m}{1+\delta_m}- Q^{-1}\frac{1-\delta_0}{1+\delta_0}Q\right) \\
&= -\frac12(1- Q^\dagger Q)Q^{-1}\frac{1-\delta_0}{1+\delta_0}Q,
\end{split}\]
where the second and third equalities follow respectively from the identities
\[
\frac1{1+\delta} = \frac12\left(1+\frac{1-\delta}{1+\delta}\right), \qquad Q^\dagger = \frac{1-\delta_m}{1+\delta_m} Q^{-1} \frac{1+\delta_0}{1-\delta_0},
\]
and the latter has been shown in the proof of Cor.~\ref{factor}. The
first claim then follows immediately from Thm.~\ref{thm:HS}. In order to prove the second claim it is then sufficient to show that the operator norms of $Q : K_m + i K_m \to K_0+iK_0$ and of $Q^{-1}: K_0 + i K_0 \to K_m + i K_m$ are uniformly bounded in $m$ for $m$ in a neighbourhood of $0$. This follows at once from the estimates in~\cite[Prop.\ A.2]{FG}, according to which, for $f \in H^{-1/2}_\bR(B)$, $g \in H^{1/2}_{\bR}(B)$,
\[\begin{split}
\| Q(\omega_m^{-1/2} f +i \omega_m^{1/2} g) \|^2 &= \|\omega_0^{-1/2} f +i \omega_0^{1/2} g\|^2 = \|f\|_{-1/2,0}^2+\|g\|_{1/2,0}^2 \\
&\leq c(m,B)^2\|f\|_{-1/2,m}^2+\|g\|_{1/2,m}^2 \leq c(m,B)^2 \|\omega_m^{-1/2} f +i \omega_m^{1/2} g\|^2, \\
\| Q^{-1}(\omega_0^{-1/2} f +i \omega_0^{1/2} g) \|^2 &= \|\omega_m^{-1/2} f +i \omega_m^{1/2} g\|^2 = \|f\|_{-1/2,m}^2+\|g\|_{1/2,m}^2 \\
&\leq \|f\|_{-1/2,0}^2+c(m,B)^2\|g\|_{1/2,0}^2 \leq c(m,B)^2 \|\omega_0^{-1/2} f +i \omega_0^{1/2} g\|^2, 
\end{split}\]
with $\sup_{0 < m \leq 1} c(m,B) < +\infty$.

Let now $\lambda \in {\mathbb C} \setminus [0,+\infty)$ be arbitrary. From the identity
\[
\left(1+\frac{\lambda+1}{\delta_m - \lambda}\right)\frac1{1+\delta_m} = \frac1{\delta_m-\lambda},
\]
it follows that defining $E=1+(\lambda+1)(\delta_m - \lambda 1)^{-1}$, $F=Q^{-1}[1+(\lambda+1)(\delta_0 - \lambda 1)^{-1} ]Q$, 
one has
\[
 E\Big(\frac{1}{1+\delta_m} - Q^{-1} \frac{1}{1+\delta_0} Q\Big)F =  \frac{1}{\delta_m-\lambda}F - E Q^{-1}\frac1{\delta_0-\lambda} Q = R(\lambda,\delta_m) - Q^{-1} R(\lambda,\delta_0) Q.
 \]
 Since, by functional calculus, the operator norms of $E$,$F$ are uniformly bounded in $m$, the statement follows from the above identity.
\end{proof}

We also remark that from the identity
\begin{multline*}
\left(\frac1{1+\delta_m} + Q^{-1}\frac1{1+\delta_0}Q\right)\left(\frac1{1+\delta_m}-Q^{-1}\frac1{1+\delta_0}Q\right) \\
= \left(\frac1{1+\delta_m}\right)^2- Q^{-1}\left(\frac1{1+\delta_0}\right)^2 Q+\left[ \frac1{1+\delta_0}, Q^{-1}\frac1{1+\delta_m}Q\right]
\end{multline*}
it follows that the commutator $\left[ \frac1{1+\delta_0}, Q^{-1}\frac1{1+\delta_m}Q\right]$ is Hilbert-Schmidt too. By a similar reasoning one can also show that
\[
\frac1{(1+\delta_0)^\alpha} Q^{-1} \frac1{1+\delta_m}Q - Q^{-1}\frac1{1+\delta_m} Q \frac1{(1+\delta_0)^\alpha}
\]
is Hilbert-Schmidt for all $\alpha >0$.

\subsection{Free scalar field in $d=1$}\label{subsec:lowdim}
We now turn back to the setting of \cite[Section 3]{CM3} to which we refer for notations, terminology and further details, and consider $H = L^2(\bR)$ with the usual scalar product. Denoting by $\cD_0(I)$ the space of smooth, complex valued functions $f$ with compact support in the interval $I \subset \bR$ such that $\int_I f = 0$, we define, for $I$ bounded, the real closed subspace of $H$
$$K_m(I)=\{\omega_m^{-1/2} \Re f + i \omega_m^{1/2} \Im f \ | \ f \in \cD_0(I) \}^{-\|\cdot\|} \ , \qquad m \geq 0.$$

\begin{Proposition}
For any $m>0$,  the subspace $K_m(I)$ is a standard subspace of $H$.
\end{Proposition}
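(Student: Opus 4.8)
The plan is to reduce standardness to two statements about a pair of real subspaces, dispose of the easy one, and isolate the genuine difficulty in a unique-continuation statement.

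First I would record the decomposition of $K_m(I)$. Writing $f=\Re f+i\Im f$, the condition $\int_I f=0$ is equivalent to $\int_I\Re f=\int_I\Im f=0$, so the generators of $K_m(I)$ are exactly the vectors $\omega_m^{-1/2}u+i\omega_m^{1/2}v$ with $u,v$ smooth, real valued, compactly supported in $I$ and of zero mean; write $\cD_{0,\bR}(I)$ for this real test function space. Since $\omega_m^{\pm1/2}$ preserve real valuedness, $\omega_m^{-1/2}u$ is real and $i\omega_m^{1/2}v$ purely imaginary, hence real orthogonal in $L^2(\bR)$, and
\[
K_m(I)=\cM_\varphi+i\cM_\pi,\qquad \cM_\varphi:=\overline{\omega_m^{-1/2}\cD_{0,\bR}(I)},\quad \cM_\pi:=\overline{\omega_m^{1/2}\cD_{0,\bR}(I)}
\]
(closures in $L^2$). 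Comparing real and imaginary parts gives $K_m(I)\cap iK_m(I)=(\cM_\varphi\cap\cM_\pi)+i(\cM_\varphi\cap\cM_\pi)$ and shows that $\overline{K_m(I)+iK_m(I)}=H$ is equivalent to $\cM_\varphi+\cM_\pi$ being dense in the real Hilbert space of real valued $L^2$ functions. Thus standardness amounts to \emph{(a)} $\cM_\varphi\cap\cM_\pi=\{0\}$ and \emph{(b)} $\overline{\cM_\varphi+\cM_\pi}$ equals real $L^2(\bR)$.

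For (a) I would note the inclusions $\cM_\varphi\subseteq\cL_\varphi:=\overline{\omega_m^{-1/2}\cD_\bR(I)}$ and $\cM_\pi\subseteq\cL_\pi:=\overline{\omega_m^{1/2}\cD_\bR(I)}$ attached to the unconstrained test function space. For $m>0$ there is no infrared obstruction and the unconstrained local subspace $\cL_\varphi+i\cL_\pi$ is standard (as in~\cite{Ar1,FG}, and in the one-dimensional setting of~\cite{CM3}), in particular factorial, whence $\cL_\varphi\cap\cL_\pi=\{0\}$; the inclusions then force $\cM_\varphi\cap\cM_\pi=\{0\}$.

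Claim (b) is the crux, and here the zero-mean constraint genuinely matters. Since $r\mapsto\int_I r=\langle r,\chi\rangle$ for any fixed $\chi\in\cD_\bR(\bR)$ with $\chi\equiv1$ on $\bar I$, one has $|\int r|\le\|\omega_m^{-1/2}r\|\,\|\omega_m^{1/2}\chi\|$ and $|\int r|\le\|\omega_m^{1/2}r\|\,\|\omega_m^{-1/2}\chi\|$, so the mean functional is bounded in both graph norms and each of $\cM_\varphi,\cM_\pi$ has codimension one in $\cL_\varphi,\cL_\pi$. Computing the orthogonal complement, $g\perp\cM_\varphi+\cM_\pi$ translates, after moving $\omega_m^{\pm1/2}$ across the inner product, into $\omega_m^{-1/2}g\equiv a$ and $\omega_m^{1/2}g\equiv b$ on $I$ for constants $a,b$; and if $a=b=0$ then $g\perp\cL_\varphi+\cL_\pi$, so $g=0$ by the density used in (a). Hence (b) is equivalent to the unique-continuation statement: the only $g\in L^2(\bR)$ with $\omega_m^{-1/2}g$ and $\omega_m^{1/2}g$ both distributionally constant on $I$ is $g=0$.

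I expect this last statement to be the main obstacle, and I would attack it through the nonlocality and analyticity of $\omega_m=(m^2-\partial_x^2)^{1/2}$. Setting $\phi:=\omega_m^{-1/2}g\in L^2$, the two conditions read $\phi\equiv a$ and $\omega_m\phi\equiv b$ on $I$; since $\omega_m$ acts as multiplication by $m$ on constants, the distributions $\phi-a$ and $\omega_m\phi-b$ (with $a,b$ also denoting constant functions) are supported in $\bR\setminus I$, and a short manipulation gives $\omega_m(\phi-a)=b-am$ on $I$. Inverting $\omega_m$ with its explicit convolution kernel (proportional to $K_0(m|\cdot|)$, real-analytic off the diagonal and exponentially decaying), the left-hand side is, on $I$, a real-analytic function determined by a distribution supported outside $I$; requiring it to be constant, together with the $L^2$ decay of $g$ (hence $\phi$) at infinity, should force $a=b=0$ and $\phi=0$. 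This is the most delicate point, and appeal to the detailed constructions of~\cite[Section~3]{CM3} is likely to be the most efficient route.
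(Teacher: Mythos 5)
Your reduction of standardness to the two conditions (a) $\cM_\varphi\cap\cM_\pi=\{0\}$ and (b) $\overline{\cM_\varphi+\cM_\pi}=L^2_\bR(\bR)$ is correct, and (a) is indeed harmless: $K_m(I)$ sits inside the unconstrained local subspace $\cL_\varphi+i\cL_\pi$, which is standard for $m>0$, so $K_m(I)\cap iK_m(I)=\{0\}$. The problem is that (b), which you correctly identify as the crux, is not proved. What has to be excluded is a nonzero $g\in L^2(\bR)$ with $\omega_m^{-1/2}g\equiv a$ and $\omega_m^{1/2}g\equiv b$ on $I$; since each mean functional is continuous in the relevant graph norm, $\overline{\cM_\varphi+\cM_\pi}$ has real codimension at most two in $L^2_\bR(\bR)$, and the entire content of the proposition is that this codimension is zero. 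Your proposed mechanism --- inverting $\omega_m$ through its convolution kernel and appealing to real-analyticity off the diagonal --- does not close the argument: a function on $I$ obtained by smearing an analytic kernel against a distribution supported in $\bR\setminus I$ can perfectly well be constant on $I$, so analyticity by itself produces no contradiction, and you offer no mechanism exploiting the simultaneous validity of \emph{both} conditions. You flag this yourself ("should force", "most delicate point"), so the key step remains conjectural.

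The paper avoids this elliptic unique-continuation problem altogether. It reformulates standardness as cyclicity of the Fock vacuum $\Omega$ for the local algebra, proves the additivity statement $\overline{\cC^{(m)}(O_I)\Omega}=\overline{\cC^{(m)}(O_J)\Omega}$ for $I\Subset J$ by decomposing an arbitrary zero-mean test function on $J$ into a sum of zero-mean test functions supported in small overlapping subintervals (a partition of unity corrected by bump functions $\varphi_k$ that transport the means across the overlaps), and then invokes the Reeh--Schlieder argument. The decisive input there is translation covariance together with the spectrum condition --- dynamical information that your static argument never uses, and which is what actually forces $a=b=0$ and $g=0$. To salvage your route you would need an independent proof of the dual statement in (b), and the only available tool is essentially the Reeh--Schlieder mechanism, at which point you have reconstructed the paper's proof.
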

\begin{proof}
It is enough to show that the vector $\Om \in e^H$ is cyclic for $\cCm(O_I)$. By \cite[Proposition 3.1(i)]{CM3}, the statement will follow from the fact that,
for any $I \Subset J$, $\overline{\cCm(I) \Om} = \overline{\cCm(J) \Om }$. 
To this end, consider open intervals $I_{n,k} \subset J$, $k=1,\dots,n$, of fixed length $\varepsilon_n$ such that $\varepsilon_n \to 0$ as $n \to +\infty$, $J = \bigcup_{k=1}^n I_{n,k}$ and $I_{n,k}\cap I_{n,k+1} \neq \emptyset$ for $k=1,\dots,n-1$. Then we pick functions $\varphi_k \in \cD(I_{n,k}\cap I_{n,k+1})$ with $\int \varphi_k = 1$, and given $f \in \cD_0(J)$ it is possible to find a partition of unity $\chi_1, \dots, \chi_n$ of the compact set $\supp f \subset J$ with $\supp \chi_k \subset I_{n,k}$. This entails $f = \sum_{k=1}^n f_k$ with $f_k \in\cD_0(I_{n,k})$ given by
\[
f_1 = \chi_1 f -\alpha_1 \varphi_1, \quad f_k = \sum_{j=1}^{k-1} \alpha_j\varphi_{k-1}+\chi_k f - \sum_{j=1}^k \alpha_j \varphi_k, \; k=2,\dots,n-1, \quad f_n = \sum_{j=1}^{n-1}\alpha_j \varphi_{n-1}+\chi_n f,
\] 
where $\alpha_j = \int \chi_j f$. This shows that $\cCm(O_J) = \bigvee_{k=1}^n \cCm(O_{I_{n,k}})$. Then, for $n \in \bN$ sufficiently large, one can find $\bar k$ such that $I_{n,\bar k} \Subset I$. Therefore if $\Phi \in (\cCm(O_I)\Om)^\perp$ then, by the Reeh-Schlieder argument, $\Phi \in \Big(\bigvee_{x \in {\bR}} \cCm(I_{n,\bar k} + x)\Om\Big)^\perp$ so that $\Phi \in (\bigvee_{k=1}^n\cCm(O_{I_{n,k}})\Om)^\perp = (\cCm(O_J)\Om)^\perp$.
\end{proof}

\begin{Proposition}
$K_0(I)$ is a standard subspace of the complex Hilbert space
\[
H_0=\{\omega_0^{-1/2} f + i \omega_0^{1/2} g \ | \ f, g \in \cD_0(\bR)\}^{-\|\cdot\|} \subset L^2(\bR) \ .
\]
\end{Proposition}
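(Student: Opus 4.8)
The plan is to follow the pattern of the preceding proposition: reduce standardness of $K_0(I)$ in $H_0$ to the cyclicity of the massless vacuum $\Oz$ for the local algebra $\cCz(O_I)$, the genuinely new feature being that, because of the infrared behaviour at $m=0$, the ambient one–particle space is the proper subspace $H_0 \subset L^2(\bR)$ rather than all of $L^2(\bR)$. Since $\omega_0^{\pm 1/2}$ are Fourier multipliers with real, even symbols, they preserve real–valuedness; writing $\cD_0(I)_\bR$ for the real elements of $\cD_0(I)$ and setting $\cL_\varphi := \overline{\omega_0^{-1/2}\cD_0(I)_\bR}$, $\cL_\pi := \overline{\omega_0^{1/2}\cD_0(I)_\bR}$ (real $L^2$–closures), the same reasoning as in Section~\ref{subsec:highdim} shows that $\cL_\varphi$ and $i\cL_\pi$ are real orthogonal and $K_0(I) = \cL_\varphi + i\cL_\pi$, the sum being $\Re\langle\cdot,\cdot\rangle$–orthogonal. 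Repeating the computation with $\cD_0(\bR)$ in place of $\cD_0(I)$ gives $H_0 = N + iN$, where $N := \overline{\omega_0^{-1/2}\cD_0(\bR)_\bR + \omega_0^{1/2}\cD_0(\bR)_\bR}$.

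Separating real and imaginary parts then reduces the two defining conditions of standardness to statements about these real subspaces. Indeed, an element of $K_0(I) \cap iK_0(I)$ is of the form $a + ib = -d + ic$ with $a,c \in \cL_\varphi$, $b,d \in \cL_\pi$, so that $a=-d \in \cL_\varphi \cap \cL_\pi$ and $b=c \in \cL_\varphi \cap \cL_\pi$; hence $K_0(I)\cap iK_0(I) = \{0\}$ is equivalent to the transversality
\[
\cL_\varphi \cap \cL_\pi = \{0\} \qquad \text{(the separating property).}
\]
Likewise $K_0(I)+iK_0(I) = (\cL_\varphi+\cL_\pi) + i(\cL_\varphi+\cL_\pi)$, whose closure is $N+iN = H_0$ precisely when $\overline{\cL_\varphi + \cL_\pi} = N$ (the cyclic property, i.e.\ density of $K_0(I)+iK_0(I)$ in $H_0$). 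I would then establish the cyclic property exactly as in the previous proposition: choosing intervals $I_{n,k}\subset J$ with $I \Subset J$, the partition–of–unity construction there (which is insensitive to the mass) yields $\cCz(O_J) = \bigvee_{k} \cCz(O_{I_{n,k}})$, and the Reeh–Schlieder argument for the massless vacuum gives $\overline{\cCz(O_I)\Oz} = \overline{\cCz(O_J)\Oz}$, hence cyclicity of $\Oz$ for $\cCz(O_I)$ on $e^{H_0}$. Granting the analogue for $m=0$ of \cite[Proposition 3.1(i)]{CM3}, which ties standardness of $K_0(I)$ to this cyclicity, the proof is complete; the transversality is the separating half of standardness and may also be seen from cyclicity of $\Oz$ for the commutant via locality. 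As a partial sanity check for transversality, membership $h \in \cL_\varphi$ forces $\omega_0^{1/2}h$ to be a distribution supported in $\bar I$ (since $\omega_0^{1/2}\colon L^2 \to H^{-1/2}$ is bounded and support–nonincreasing on the approximants), so $|p|^{1/2}\hat h$ is entire of exponential type.

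The main obstacle is the infrared. Unlike the massive case, where one works in all of $L^2(\bR)$, here one must both identify the massless one–particle space with $H_0$ and carry out the Reeh–Schlieder and transversality arguments \emph{inside} $H_0$, and this is exactly where the zero–mean condition defining $\cD_0$ is indispensable and where the detailed analysis of \cite[Section~3]{CM3} (the spectrum and analyticity input for the massless vacuum, and the control of the $p\to 0$ behaviour) is needed. Particular care is required because $\omega_0^{-1/2}$ is unbounded at low momenta: the support constraint coming from $\cL_\pi$ cannot be read off naively from the $L^2$–closure the way the $\cL_\varphi$ constraint can, so the clean analyticity argument that would combine the two entire–function conditions and force $\hat h \equiv 0$ must instead be routed through the infrared–regularised framework of \cite{CM3}.
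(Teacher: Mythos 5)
Your overall strategy is the paper's: reduce standardness of $K_0(I)$ to the Reeh--Schlieder property of the massless net, with the partition-of-unity argument of the preceding proposition supplying weak additivity. The reformulation of the two halves of standardness in terms of the real subspaces $\cL_\varphi$, $\cL_\pi$ is correct as far as it goes. But there is a genuine gap at exactly the point you flag and then defer: the identification of $e^{H_0}$ with the GNS space of the massless vacuum. The Reeh--Schlieder argument only yields that $\Omega^{(0)}$ is cyclic for $\cC^{(0)}(O_I)$ \emph{on the cyclic subspace} $\cK^{(0)}=\overline{\cC^{(0)}\Omega^{(0)}}$; to conclude cyclicity of $\Omega$ for $A(K_0(I))$ on all of $e^{H_0}$ (equivalently, density of $K_0(I)+iK_0(I)$ in $H_0$), you must first show that $\Omega$ is cyclic on $e^{H_0}$ for the \emph{global} algebra generated by the Weyl operators $W(\omega_0^{-1/2}f+i\omega_0^{1/2}g)$, $f,g$ real with zero mean --- i.e., that $e^{H_0}$ is not strictly larger than the GNS space. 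This is the one genuinely new step relative to the massive case, where the ambient space is all of $L^2(\bR)$ and the corresponding totality is classical. The paper proves it directly: expanding $\langle\Phi, W_0(\lambda(f+ig))\Omega\rangle$ in a power series in $\lambda$, deducing $\langle\Phi_n,(\omega_0^{-1/2}f+i\omega_0^{1/2}g)^{\otimes n}\rangle=0$ for each $n$, and then using polarization and complex linearity together with the fact that such vectors are total in $H_0$ \emph{by the very definition of} $H_0$ to force $\Phi=0$. No infrared analysis from \cite{CM3} is needed for this step; ``granting the analogue of Prop.~3.1(i) for $m=0$'' is precisely assuming what has to be proved.

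Two smaller points. First, your separating half is left as a ``partial sanity check'': the Paley--Wiener observation for $\cL_\varphi$ does not combine with anything on the $\cL_\pi$ side, and you acknowledge this. The clean route (and the paper's) is to get separation for free from Reeh--Schlieder plus locality once the unitary equivalence $\cK^{(0)}\cong e^{H_0}$ intertwining $\cC^{(0)}(O_I)$ with $A(K_0(I))$ is in place, and then invoke Araki's cyclic-and-separating $\Leftrightarrow$ standard. Second, the decomposition $K_0(I)=\cL_\varphi+i\cL_\pi$ with the closure distributing over the real-orthogonal sum is fine, but it is scaffolding you never actually use to close either half of the argument. In short: the skeleton is right, but the load-bearing step --- totality of the coherent vectors over the zero-mean test functions in $e^{H_0}$ --- is missing.
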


\begin{proof}
By a similar argument as in the above proposition, the net $\cC^{(0)}$ satisfies weak additivity, and then the Reeh-Schlieder property, on its cyclic Hilbert space  $\cK^{(0)} = \overline{\cC^{(0)}\Omega^{(0)}}$. Moreover, the operators on $e^{H_0}$
\begin{equation}\label{eq:W0}
W_0(f+i g) := W(\omega_0^{-1/2} f + i \omega_0^{1/2} g ), \qquad f, g \in \cD(\bR,\bR), \int f = 0 = \int g,
\end{equation}
generate the Weyl C*-algebra associated to the symplectic space $\cD_0(\bR)$ on which $\Omega \in e^{H_0}$ induces the vacuum state $\omega^{(0)}$ whose GNS representation defines $\cC^{(0)}$. We show that $\Omega$ is cyclic for the algebra generated by the operators~\eqref{eq:W0}. To this end, let $\Phi \in e^{H_0}$ be orthogonal to all vectors $W_0(f+ig) \Omega$. Then, for all $\lambda \in \bR$,
\[
\langle \Phi, W_0(\lambda(f+ig)) \Omega\rangle = e^{-\frac14\|\omega_0^{-1/2} f + i \omega_0^{1/2} g\|^2} \sum_{n=0}^{+\infty}\frac1{\sqrt{n!}} \left(\frac{i\lambda}{\sqrt{2}}\right)^n \langle \Phi_n, (\omega_0^{-1/2} f + i \omega_0^{1/2} g)^{\otimes n}\rangle = 0,
\]
where the power series converges for all $\lambda \in \bR$, so that
\[
\langle \Phi_n, (\omega_0^{-1/2} f + i \omega_0^{1/2} g)^{\otimes n}\rangle = 0, \qquad n \in \bN.
\]
By polarization and complex linearity this entails
\[
\langle \Phi_n,S_n\big((\omega_0^{-1/2} f_1 + i \omega_0^{1/2} g_1)\otimes \dots \otimes (\omega_0^{-1/2} f_n + i \omega_0^{1/2} g_n)\big)\rangle = 0, \qquad n \in \bN.
\]
for all $f_j, g_j \in \cD(\bR,\bC)$, $\int f_j = 0 = \int g_j$, $j=1,\dots, n$, i.e., $\Phi = 0$. Therefore, by GNS unicity, the algebras $\cC^{(0)}(O_I)$ on $\cK^{(0)}$ are unitarily equivalent to the second quantization algebras $A(K_0(I))$ on $e^{H_0}$, which proves the claim.
\end{proof}

The following result is a reformulation, in the present framework, of \cite[Thm.\ A.1]{CM3}.

\begin{Theorem}
The map $Q : K_m(I) \to K_0(I)$, $\omega_m^{-1/2} \Re f + i \omega_m^{1/2} \Im f \mapsto \omega_0^{-1/2} \Re f + i \omega_0^{1/2} \Im f$, is a symplectomorphism which induces a quasi free isomorphism between the corresponding second quantization algebras.
\end{Theorem}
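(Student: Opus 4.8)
The plan is to deduce the statement from the quasi-equivalence result already contained in \cite[Thm.\ A.1]{CM3}, rather than to attempt a direct verification of the Hilbert-Schmidt criterion of Thm.~\ref{main}: in $d=1$ the latter route would run into the same obstructions met in Sec.~\ref{subsec:highdim} together with the infrared problems peculiar to the massless field.

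First I would check that $Q$ is a symplectomorphism. Writing $h = \omega_m^{-1/2} f_1 + i\omega_m^{1/2} g_1$ and $k = \omega_m^{-1/2} f_2 + i\omega_m^{1/2} g_2$ with $f_1 = \Re f$, $g_1 = \Im f$, and similarly for $k$, a short computation using that $\omega_m$ preserves real functions gives
\[
\Im\langle h,k\rangle = \langle f_1,g_2\rangle - \langle g_1,f_2\rangle,
\]
which is manifestly independent of $m$: the symplectic form pairs the field component $\omega_m^{-1/2}\Re(\cdot)$ of one vector with the momentum component $\omega_m^{1/2}\Im(\cdot)$ of the other, so that the $\omega_m^{\pm 1/2}$ factors cancel. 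Hence $\Im\langle Qh,Qk\rangle = \Im\langle h,k\rangle$. It then remains to observe that $Q$ is a genuine real linear bijection of the \emph{closures} $K_m(I)$ and $K_0(I)$; this requires that $Q$ and $Q^{-1}$ be bounded on the respective dense domains spanned by $\cD_0(I)$, a fact I would extract from Sobolev-type estimates in the spirit of \cite[Prop.\ A.2]{FG}.

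Next I would reduce the existence of $\phi$ to a property of states on one single abstract Weyl algebra. By the two preceding propositions, $A(K_m(I))$ and $A(K_0(I))$ arise as GNS von Neumann algebras of the quasi-free vacuum states $\omega^{(m)}$ and $\omega^{(0)}$ on the Weyl $C^*$-algebra over the symplectic space $(\cD_0(I),\sigma)$, with $\sigma$ the $m$-independent form computed above; concretely, these propositions furnish the identifications $\cCm(O_I) \cong A(K_m(I))$ and $\cCz(O_I) \cong A(K_0(I))$ via GNS uniqueness. Under these identifications $Q$ corresponds to the identity map on $\cD_0(I)$, so that an isomorphism $\phi$ with $\phi(W(k)) = W(Qk)$ is exactly the normal $*$-isomorphism witnessing the quasi-equivalence of $\omega^{(m)}$ and $\omega^{(0)}$.

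The core input is then \cite[Thm.\ A.1]{CM3}, which establishes precisely this quasi-equivalence for test functions of zero mean in $d=1$; combined with the two identifications above it produces the desired $\phi : A(K_m(I)) \to A(K_0(I))$. I expect the main difficulty to lie not in any single estimate but in the careful bookkeeping across the two distinct ambient Hilbert spaces $H = L^2(\bR)$ and $H_0 \subset L^2(\bR)$: one must ensure that the abstract quasi-equivalence transfers to a genuine spatial isomorphism of the two concrete second quantization algebras, and that the zero-mean constraint built into $\cD_0(I)$---introduced to cure the infrared divergence of the massless two-point function---is respected at every step of the identification. Establishing the honest bijectivity of $Q$ between the closed subspaces, rather than merely its definition on the dense domain, is the one technical point I would handle with particular care.
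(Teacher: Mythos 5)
Your argument is logically sound but takes a genuinely different route from the paper, and one that somewhat misses the point of the section. You invoke the final conclusion of \cite[Thm.\ A.1]{CM3} (local quasi-equivalence of the massive and massless zero-mean vacua) and then translate it, via the GNS identifications furnished by the two preceding propositions, into the existence of $\phi$; since the theorem at hand is explicitly described as a reformulation of that result, your proof essentially consists of carrying out the reformulation, which is valid but adds little beyond careful bookkeeping. The paper instead routes the conclusion through its own abstract criterion: it first observes that Thm.~\ref{main} and its proof carry over \emph{mutatis mutandis} to standard subspaces of two \emph{different} ambient Hilbert spaces (this single remark disposes of the $H$ versus $H_0$ bookkeeping you flag as the main difficulty), and then verifies the sufficient condition that $1-Q^\dagger Q$ be trace class on $K_m(I)$ (via \cite[Lemma 4.1]{PS}, as noted after Thm.~\ref{main}). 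The verification is done by conjugating with the real-orthogonal map $V_m : \bDoIm \to K_m(I)$, $f \mapsto \omega_m^{-1/2}\Re f + i\omega_m^{1/2}\Im f$, which satisfies $QV_m = V_0$ and identifies $Q^\dagger Q$ with the operator $T$ of \cite[App.\ A]{CM3}, whose trace-class deviation from $1$ is \cite[Lemmas A.4 and A.6]{CM3}. This identification also settles, for free, the boundedness and bounded invertibility of $Q$ on the closed subspaces, whereas your appeal to estimates ``in the spirit of \cite[Prop.\ A.2]{FG}'' is shaky: those estimates concern $d\geq 2$ and do not directly address the $d=1$ infrared situation, which is precisely why the zero-mean constraint and the specific norms $\|\cdot\|_m$ of \cite{CM3} are needed. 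In short, your proof buys brevity at the cost of bypassing the abstract criterion the section is meant to illustrate and of leaning on a reference for the boundedness of $Q$ that does not cover the case at hand; the paper's proof buys a self-contained demonstration that Thm.~\ref{main} applies, at the cost of importing the technical lemmas (rather than the theorem) from \cite{CM3}.
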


\begin{proof}
It is clear that Thm.~\ref{main} and its proof remain valid, \emph{mutatis mutandis}, also for a symplectomorphism between standard subspaces $K_1$, $K_2$ of two different complex Hilbert spaces $H_1$, $H_2$. It is then sufficient to prove that $1-Q^\dagger Q$ is a trace class operator on $K_m(I)$. To this end, consider the map
\[
V_m : \overline{\cD_0(I)}^{\|\cdot\|_m} \to K_m(I), \quad f \mapsto \omega_m^{-1/2} \Re f + i \omega_m^{1/2} \Im f, \qquad m \geq 0.
\]
It is clear that $V_m$ is an orthogonal transformation between real Hilbert spaces and that $QV_m = V_0$. Therefore
\[
\Re \langle f, V_m^{-1}Q^\dagger Q V_m g\rangle_m = \Re \langle QV_m f, QV_m g \rangle  = \Re \langle f, g\rangle_0
\]
implying $V_m^{-1}Q^\dagger Q V_m= T$,  the operator defined in~\cite[App.\ A]{CM3}. The statement then follows by~\cite[Lemmas\ A.4 and A.6]{CM3}.
\end{proof}

\emph{Acknowledgements.} We thank Daniele Guido, Roberto Longo and Detlev Buchholz for useful discussions on the subject of this work. We acknowledge partial support by the INdAM-GNAMPA. R.C.\ is partially supported by Sapienza University of Rome (Progetti di Ateneo 2019, 2020). G.M.\ is partially supported by the ERC Advanced Grant 669240 \emph{QUEST}, the MIUR Excellence Department Project \emph{MatMod@TOV} awarded to
the Department of Mathematics, University of Rome ``Tor Vergata'', CUP E83C23000330006, and the University of Rome ``Tor Vergata'' funding \emph{OAQM}, CUP E83C22001800005.



\end{document}